\newtheorem{theorem}{Theorem}[section]
\newtheorem{prop}[theorem]{Proposition}
\newtheorem{lemma}[theorem]{Lemma}
\newtheorem{corollary}[theorem]{Corollary}
\theoremstyle{definition}
\newtheorem{construction}[theorem]{Construction}
\newtheorem{remark}[theorem]{Remark}
\newtheorem*{acknowledgement}{Acknowledgement}
\newcommand{\Nrd}{\mathrm{Nrd}}
\newcommand{\Z}{\mathbb{Z}}
\newcommand{\im}{\mathop{\mathrm{im}}}
\newcommand{\Aut}{\mathrm{Aut}}
\newcommand{\Gal}{\mathrm{Gal}}
\newcommand{\Mat}{\mathrm{Mat}}
\newcommand{\tr}{\mathrm{tr}}
\newcommand{\x}{\mathbf{x}}
\newcommand{\y}{\mathbf{y}}
\title{Finite subgroups of automorphism groups of Severi--Brauer varieties}
\author{Anna Savelyeva}
\date{}
\begin{document}

\begin{abstract}
We discuss the structure of finite subgroups acting on minimal Severi--Brauer varieties and provide a complete description of such groups for the varieties of dimension $p-1$, where $p \geqslant 3$ is prime. \\[-1 cm]
\end{abstract}

\maketitle

\section{Introduction}

An algebraic variety $X$ of dimension $n$ over a field $k$ is called a \textit{Severi--Brauer variety} if it becomes isomorphic to $\mathbb{P}^n_{\bar k}$ after the extension of scalars to the algebraic closure $\bar k$ of $k$. We call a Severi--Brauer variety \textit{non-trivial}
 if it is not isomorphic to $\mathbb{P}^n_k$ over the base field~$k$.

Given a field $k$ and a positive integer $n$, there exists a one-to-one correspondence between Severi--Brauer varieties of dimension $n-1$ and central simple algebras of degree $n$ which, moreover, preserves automorphism groups (see~{\cite[Ch. III, \S 
\,III]{Chat} or \cite[Ch. 5]{GS}}). The correspondence is important in the study of Severi--Brauer varieties and their automorphisms, as it allows to study the geometric object using purely algebraic methods. According to Skolem--Noether's theorem (see~\cite[Thr. 2.7.2]{GS}) all automorphisms of central simple algebras are inner, so, given a central simple algebra $A$ over a field~$k$, its automorphism group $\Aut(A)$ is isomorphic to~$A^*/k^*$. Therefore, as mentioned above, the automorphism group of the corresponding Severi--Brauer variety will be isomorphic to $A^*/k^*$ as well.

Nevertheless, the structure of automorphism groups of non-trivial Severi--Brauer varieties is still complicated. One can then ask whether we could describe the finite subgroups of these groups. However, in this generality the question does not make much sense. In fact, given a division algebra~$D$, the matrix algebras $\Mat_n(D)$ are central simple for all positive~$n$, and for every finite group $G$ there exists a number~$n$ large enough such that $G$ acts faithfully on~$\Mat_n(D)$. 

Wedderburn theorem (see {\cite[Thr. 2.1.3]{GS}}) states that, in fact,
every finite dimensional central simple algebra over a field $k$ is isomorphic to a matrix algebra $\Mat_n(D)$ with coefficients in a central division algebra $D$ over~$k$. Therefore, the following variation of the general question is natural: what are the finite subgroups of automorphism groups of Severi--Brauer varieties that correspond to division algebras? Such Severi--Brauer varieties are called \textit{minimal} and have a geometric meaning: those are exactly the Severi--Brauer varieties that have no non-trivial twisted linear Severi--Brauer subvarieties (see, for example, \cite[Def. 26]{Kollar}). \\[-5pt]

The goal of this text is to study finite subgroups of automorphism groups of division algebras and, accordingly, of minimal Severi--Brauer varieties. We prove the following theorem.

\begin{theorem} \label{main}
Let $A$ be a division algebra of degree $n$ over a field $k$ of characteristic coprime with~$n$. Then for every finite subgroup $$G \subset \Aut(A) \cong A^*/k^*$$ there exists a finite subgroup $\widetilde{N}_G \subset A^*$ such that its image $N_G \subset A^*/k^*$ under the canonical projection is a normal subgroup of $G$ and the quotient $G/N_G$ is an abelian group of order dividing~$n^2$.
\end{theorem}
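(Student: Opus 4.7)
The plan is to use the reduced norm to simultaneously cut out the normal subgroup and produce its finite lift. Since $\Nrd\colon A^{*}\to k^{*}$ is multiplicative with $\Nrd(\lambda)=\lambda^{n}$ for $\lambda\in k^{*}$, it descends to a homomorphism
\[
\overline{\Nrd}\colon A^{*}/k^{*}\longrightarrow k^{*}/(k^{*})^{n}.
\]
I would set $N_{G}:=\ker\bigl(\overline{\Nrd}|_{G}\bigr)$. This is automatically normal in $G$, and the induced injection $G/N_{G}\hookrightarrow k^{*}/(k^{*})^{n}$ shows that $G/N_{G}$ is abelian of exponent dividing $n$.

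For the lift, every $g\in N_{G}$ admits a representative $\tilde g\in A^{*}$ with $\Nrd(\tilde g)=\mu^{n}$ for some $\mu\in k^{*}$, and then $\mu^{-1}\tilde g$ lies in the group $\mathrm{SL}_{1}(A):=\ker\Nrd$. Letting $p\colon A^{*}\to A^{*}/k^{*}$ denote the canonical projection and setting $\widetilde{N}_{G}:=p^{-1}(N_{G})\cap\mathrm{SL}_{1}(A)$ produces a subgroup of $A^{*}$ fitting in the short exact sequence
\[
1\longrightarrow \mu_{n}(k)\longrightarrow \widetilde{N}_{G}\longrightarrow N_{G}\longrightarrow 1,
\]
where $\mu_{n}(k)=k^{*}\cap\mathrm{SL}_{1}(A)=\{\lambda\in k^{*}:\lambda^{n}=1\}$. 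Because $\mathrm{char}(k)\nmid n$, the polynomial $x^{n}-1$ is separable, so $\mu_{n}(k)$ has at most $n$ elements; hence $\widetilde{N}_{G}$ is finite.

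The main obstacle is to establish $|G/N_{G}|\mid n^{2}$: the reduced-norm argument alone only controls the exponent of $G/N_{G}$, while abelian groups of exponent $n$ can in principle have arbitrarily large rank inside $k^{*}/(k^{*})^{n}$. I would attack this by studying the central extension $1\to k^{*}\to p^{-1}(G)\to G\to 1$ together with its commutator pairing $G\times G\to k^{*}$, which takes values in $\mu_{n}(k)$ after restriction to the relevant subquotient. The key structural input---which distinguishes the bound $n^{2}$ from the much weaker Jordan-type bounds in $\mathrm{GL}_{n^{2}}(k)$---is that $p^{-1}(G)$ generates a division $k$-subalgebra of $A$, so that any maximal commuting system of lifts spans a subfield of degree at most~$n$. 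Combined with non-degeneracy of the induced alternating form on the relevant quotient, this should force the abelian group $G/N_{G}$ to be generated by at most two elements, yielding $|G/N_{G}|\mid n\cdot n=n^{2}$. Turning this Heisenberg-style heuristic into a rigorous rank bound is the step I expect to require the most care.
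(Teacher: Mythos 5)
Your lifting construction is correct as far as it goes: with $N_G:=\ker\bigl(\overline{\Nrd}|_G\bigr)$ the group $\widetilde N_G=p^{-1}(N_G)\cap\ker\Nrd$ is indeed a finite subgroup of $A^*$ surjecting onto $N_G$ with kernel $\mu_n(k)$, and normality and abelianness of $G/N_G$ are immediate. But your choice of $N_G$ is already problematic. It consists of the elements admitting a lift with reduced norm in $(k^*)^n$, which is in general a \emph{proper} subgroup of the set of elements admitting a lift of finite order: a finite-order lift has reduced norm a root of unity, and roots of unity need not be $n$-th powers (e.g.\ $\zeta_p\notin(\mathbb{Q}(\zeta_p)^*)^p$). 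The paper instead quotients $k^*$ by the group $H$ of \emph{all} roots of unity, and defines $N_G$ as the elements with a finite-order lift; this is the largest candidate, and it is the one for which the bound is proved. With your smaller $N_G$ the quotient $G/N_G$ is larger, and there is no reason the divisibility $|G/N_G|\mid n^2$ should survive; you would at least need to replace $(k^*)^n$ by $H\cdot(k^*)^n$ (and then the finiteness of the lift requires the separate argument of the paper's Proposition 2.4, since $\ker\Nrd$ no longer suffices).

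The decisive gap, which you acknowledge, is the bound $|G/N_G|\mid n^2$: this is where essentially all of the paper's work lies (Sections 3--6), and the mechanism you sketch is not the right one. The commutator pairing $\beta(x,y)=\widetilde x\widetilde y\widetilde x^{-1}\widetilde y^{-1}\in k^*$ is generally degenerate, and one does not show that $G/N_G$ is $2$-generated. Instead, in the case $N_G=\{1\}$ (where $G$ is abelian), the paper constructs by induction --- splitting off a cyclic direct summand generated by an element of maximal order --- an isotropic subgroup $\Gamma$ with $|G|$ dividing $|\Gamma|^2$ and with $f_A^{-1}(\Gamma)$ generating a sub\emph{field}, whence $|\Gamma|\mid n$; proving that this subalgebra has degree $|\Gamma|$ requires a trace argument (this is where characteristic coprime to $n$ enters) showing the chosen lifts are linearly independent over $k$. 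The general case then needs a further reduction: first to Sylow $p$-subgroups, and then, for a $p$-group $G$, a delicate analysis involving the field $K$ generated by finite-order lifts of (a canonical cyclic subgroup of) $N_G$, the action of $G$ on $K$ by conjugation, the subalgebra $B$ centralizing $K$ and its center $L$, with the trivial-$N_G$ case applied over $L$ and a field-theoretic divisibility statement (Proposition 4.1) gluing the pieces. None of these steps is present in the proposal, so the theorem is not proved.
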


Using the classification of finite subgroups of multiplicative groups of division algebras due to Amitsur (see \cite{Amitsur}), one can list all possible isomorphism classes of $\widetilde{N}_G$ and thus of $N_G$. In particular, one can deduce the following.

\begin{corollary} \label{A5}
The only finite simple non-abelian group that can faithfully act on a minimal Severi--Brauer variety is the icosahedral group $\mathfrak{A}_5$.
\end{corollary}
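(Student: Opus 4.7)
My plan is to reduce the statement to a pure group-theoretic question about finite subgroups of division algebras, and then invoke Amitsur's classification.

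First, I would apply Theorem \ref{main} directly. Suppose $G$ is a finite simple non-abelian group acting faithfully on a minimal Severi--Brauer variety; then $G$ embeds into $\Aut(A) \cong A^*/k^*$ for some division algebra $A$. Theorem \ref{main} produces a normal subgroup $N_G \trianglelefteq G$ whose quotient $G/N_G$ is abelian. Since $G$ is simple, $N_G$ is either trivial or equal to $G$. The trivial case forces $G$ itself abelian, contradicting the hypothesis, so $N_G = G$. In particular, $G$ is the image in $A^*/k^*$ of some honest finite subgroup $\widetilde{G} \subset A^*$, and we have a central extension
\[
1 \longrightarrow Z \longrightarrow \widetilde{G} \longrightarrow G \longrightarrow 1,
\qquad Z := \widetilde{G}\cap k^*.
\]

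Next, I would identify this kernel with the center of $\widetilde{G}$. The inclusion $Z \subseteq Z(\widetilde{G})$ is automatic since $k^*$ is central in $A^*$. For the reverse, the image of $Z(\widetilde{G})$ in $G$ lies in $Z(G)$, which is trivial since $G$ is simple non-abelian, so $Z(\widetilde{G}) \subseteq Z$. Thus $\widetilde{G}/Z(\widetilde{G}) \cong G$, and the task becomes: classify the finite subgroups of a division algebra whose quotient by the center is simple and non-abelian.

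Here I would invoke Amitsur's classification (cited in the paper immediately after Theorem \ref{main}). His list consists of cyclic groups, certain metacyclic groups, and groups built from the binary polyhedral groups $2T$, $2O$, $2I$ (possibly combined with cyclic factors of coprime order). Metacyclic and cyclic groups are solvable, hence so are all their central quotients, ruling out simple non-abelian images. For the binary polyhedral families, the central quotients are $\mathfrak{A}_4$, $\mathfrak{S}_4$, $\mathfrak{A}_5$, respectively, and only $\mathfrak{A}_5$ is simple non-abelian; adjoining a central cyclic factor does not change the central quotient. So $G \cong \mathfrak{A}_5$ is forced.

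Finally, to see $\mathfrak{A}_5$ is realized, I would exhibit the standard embedding $2I \hookrightarrow \mathbb{H}^*$ of the binary icosahedral group into Hamilton's quaternions: since $\{\pm 1\} \subset \mathbb{R}^*$, this induces $\mathfrak{A}_5 \cong 2I/\{\pm 1\} \hookrightarrow \mathbb{H}^*/\mathbb{R}^* \cong \Aut(\mathbb{H})$, acting faithfully on the real Severi--Brauer conic without real points. The main obstacle is the bookkeeping inside Amitsur's list: one must verify in \emph{every} family appearing there (not only the binary polyhedral ones) that the center meets every normal subgroup nontrivially or that the central quotient is solvable, so that $\mathfrak{A}_5$ is indeed the unique simple non-abelian quotient possible.
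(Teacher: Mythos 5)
Your proposal follows essentially the same route as the paper: reduce via Corollary \ref{N_G} and Proposition \ref{N_GLift} to the case $G=N_G$, lift $G$ to a finite subgroup $\widetilde{G}\subset A^*$ that is necessarily non-solvable, and then appeal to Amitsur. The only difference is that the paper short-circuits the case-by-case inspection of Amitsur's classification (the ``bookkeeping'' you flag as the main obstacle) by citing his Corollary 4, which states directly that the binary icosahedral group is the unique non-solvable finite subgroup of a division ring.
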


\begin{remark} \label{realConic}
Note that the group $\mathfrak{A}_5$ acts faithfully, for example, on the real conic given by the equation $x^2 + y^2 + z^2 = 0$ in $\mathbb{P}^2_\mathbb{R}$.
\end{remark}

Due to Wedderburn theorem every non-trivial Severi--Brauer variety of dimension $p-1$ is minimal when $p$ is prime. In the case when $p = 3$ and the characteristic of the base field is zero, all finite subgroups of automorphism groups of non-trivial Severi--Brauer surfaces were completely classified by Shramov in \cite{surfaces}. 

Following \cite{surfaces}, we call a semidirect product $H = \Z/n\Z \rtimes \Z/m\Z$ of two cyclic groups \textit{balanced} if the group $H$ has a trivial center. Note that, given a prime $p$ and a positive integer $n$, a balanced semidirect product exists if and only if every prime $q$ that divides $n$ is congruent to $1$ modulo $p$. A balanced semidirect product is unique when it exists. In \cite{surfaces} the finite groups acting on Severi--Brauer surfaces were described as balanced semidirect products of particular cyclic groups. As a corollary of Theorem \ref{main} we prove the following generalization of this result.

\begin{theorem} \label{mainPrime}
Let $G$ be a finite subgroup of an automorphism group of a non-trivial Severi--Brauer variety of dimension $p - 1$ over a field $k$ of characteristic coprime with $p$, where $p \geqslant 3$ is prime. Then there exists a positive integer $n$ such that  $G$ is isomorphic to a subgroup of $(\Z/n\Z \rtimes \Z/p\Z) \times \Z/p\Z$, where the semidirect product is balanced.
\end{theorem}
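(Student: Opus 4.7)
The plan is to combine Theorem~\ref{main} with the rigidity forced by the prime-degree hypothesis. Since $p$ is prime and $X$ is non-trivial, Wedderburn's theorem forces the corresponding central simple algebra to be a division algebra $A$ of degree~$p$, so $\Aut(X) \cong A^*/k^*$. Applying Theorem~\ref{main} provides a normal subgroup $N_G \lhd G$ with $G/N_G$ abelian of order dividing~$p^2$, together with a finite lift $\widetilde{N}_G \subset A^*$.

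The first substantive step is to show that $N_G$ is cyclic. The $k$-subalgebra $B \subset A$ generated by $\widetilde{N}_G$ is a division subalgebra; by the double centralizer theorem together with the fact that every subfield of $A$ has $k$-degree dividing~$p$, $B$ is either $k$, a maximal subfield $L$ of degree~$p$, or all of~$A$. In the first two cases $\widetilde{N}_G$ is a finite subgroup of the multiplicative group of a field, hence cyclic, and so is $N_G$. In the remaining case I would invoke Amitsur's classification, as already used in the proof of Theorem~\ref{main}: the prime-degree constraint eliminates the binary polyhedral and other exotic subgroups, so only metacyclic possibilities survive, and their quotient modulo $\widetilde{N}_G \cap k^*$ is still cyclic. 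In every case $N_G$ is cyclic of some order~$n$.

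Next I would analyze how $G$ acts on this cyclic group. Fix a lift $\tilde x \in A^*$ of a generator of $N_G$ and set $L = k(\tilde x)$, a subfield of degree~$p$ (or $L = k$, in which case $N_G$ is trivial and the theorem is immediate). For each $g \in G$ the relation $g N_G g^{-1} = N_G$ forces any lift $\hat g \in A^*$ to satisfy $\hat g \tilde x \hat g^{-1} \in k^* \tilde x^i$ for some $i \in (\mathbb{Z}/n\mathbb{Z})^*$, hence $\hat g L \hat g^{-1} = L$, producing a homomorphism $\psi : G \to \Aut(L/k)$ of image order~$1$ or~$p$. Its kernel $K = \ker \psi$ centralizes $\tilde x$, hence sits inside $L^*/k^*$, so $K$ is abelian; one then checks using Theorem~\ref{main} that $K/N_G$ embeds in $\mathbb{Z}/p\mathbb{Z}$. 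This realizes $G$ as an extension whose two cyclic quotient factors of order~$p$ are $\im \psi$ (acting as a Galois generator on $N_G \subset L^*$) and $K/N_G$ (acting trivially on $N_G$), fitting $G$ into $(\mathbb{Z}/n\mathbb{Z} \rtimes \mathbb{Z}/p\mathbb{Z}) \times \mathbb{Z}/p\mathbb{Z}$. Balancedness is forced because a non-trivial element of $\Gal(L/k) = \mathbb{Z}/p\mathbb{Z}$ acts without fixed points on each $q$-primary component of $N_G$ for $q \neq p$, giving $p \mid q-1$, i.e.\ $q \equiv 1 \pmod{p}$.

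The main obstacle I expect is the cyclicity of $N_G$ in the case where $\widetilde{N}_G$ generates all of $A$: one must extract from Amitsur's classification the precise list of non-abelian finite subgroups of a division algebra of odd prime degree, and verify that each of them descends to a cyclic quotient modulo the central subgroup $\widetilde{N}_G \cap k^*$. A secondary, but still non-trivial, difficulty is the bookkeeping of the $p$-part of $N_G$ and of $G/N_G$: one must ensure that $G/N_G$ has exponent~$p$ (so that $\mathbb{Z}/p^2\mathbb{Z}$ quotients cannot occur) and that any $p$-torsion of $N_G$ is compatible with the balanced structure. For $p = 3$ this bookkeeping is carried out by Shramov in~\cite{surfaces}, and the present argument should be a direct generalization.
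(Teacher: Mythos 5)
There is a genuine gap at the heart of your argument: the claim that $N_G$ is cyclic is false in general. In the case where $\widetilde{N}_G$ generates all of $A$, Amitsur's classification leaves the metacyclic groups $\Z/m\Z \rtimes \Z/s\Z$, and your proposed fix --- that ``their quotient modulo $\widetilde{N}_G \cap k^*$ is still cyclic'' --- does not work: a metacyclic group modulo a \emph{central} subgroup is cyclic only if the group was abelian to begin with, and the relevant metacyclic subgroups of division algebras typically have small or trivial center. Concretely, for $p=3$ the balanced group $\Z/7\Z \rtimes \Z/3\Z$ (which has trivial center) embeds into $A^*$ for a suitable cubic division algebra, so $N_G \cong \Z/7\Z \rtimes \Z/3\Z$ is attained and is non-cyclic; indeed, if $N_G$ were always cyclic the theorem's conclusion would not need a semidirect product at all. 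This breaks the subsequent architecture, since you fix a lift $\tilde{x}$ of ``a generator of $N_G$'' and build $L = k(\tilde{x})$ around it. The repair is what the paper does: Proposition~\ref{c1} shows all Sylow subgroups of $N_G$ are cyclic (using that a degree-$2$ subfield cannot exist in a degree-$p$ algebra), Zassenhaus's theorem (Theorem~\ref{groups}) then makes $N_G$ an extension of $\Z/m\Z$ by $\Z/n\Z$ with $(m,n)=1$, and Corollary~\ref{char} extracts a \emph{characteristic cyclic} subgroup $\Theta$ of $N_G$, hence a normal cyclic subgroup of $G$; the field $K$ is generated by the preimage of $\Theta$, not of $N_G$.

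A second, smaller gap is one you yourself flag but do not close: having produced the extension
$$1 \to G' \to G \xrightarrow{\;\varphi\;} \Z/p\Z \to 1$$
with $G' \subset K^*/k^*$ abelian, you still must rule out elements of order $p^2$ (otherwise $G$ need not embed into $(\Z/n\Z \rtimes \Z/p\Z)\times\Z/p\Z$). The paper does this by showing that any $t\in G$ mapping to a generator of $\Gal(K/k)$ has order exactly $p$: if it had order $p^2$, then $t^p$ would be an element of order $p$ admitting no finite-order lift, contradicting Proposition~\ref{GaloisAdd} applied to the field generated by $f_A^{-1}(t)$. Your treatment of balancedness (the Galois generator acts fixed-point-freely on $\Z/n\Z$, forcing $q \equiv 1 \pmod p$ for every prime $q \mid n$) does match the paper's, and the reduction to a division algebra of degree $p$ via Wedderburn is correct; but without the characteristic-cyclic-subgroup step and the exclusion of $\Z/p^2\Z$, the proof as written does not go through.
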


\begin{remark}
As one can see from Remark~\ref{realConic}, Theorem~\ref{mainPrime} does not hold for $p = 2$.
\end{remark}

The structure of the text is as follows. In Section 2 we introduce the construction of the group $N_G$ mentioned in Theorem~\ref{main}, and study its basic properties. Section 3 is devoted to the discussion of several results on the structure of $p$-subgroups of automorphism groups of central division algebras. In Section~4 we present several field theory results to be used later. Section~5 contains the proof of Theorem~\ref{main} in the case when the group $N_G$, constructed in Section~2, is trivial. The proof of Theorem~\ref{main} is completed in Section~6. In Section~7 we concentrate on division algebras of prime degree and prove Theorem~\ref{mainPrime}.

\begin{acknowledgement}
I am very grateful to Constantin Shramov for stating the problem, numerous valuable discussions, as well as great patience throughout the whole process of the text creation. I am also grateful to Andrey Trepalin and Sergey Tikhonov for the detailed revision of the text.
\end{acknowledgement}

\section{Lifting construction} \label{N_Gsect}
In this section, given a finite subgroup $G \subset A^*/k^*$ for a central simple algebra $A$, we try to construct its subgroup that can be lifted to a finite subgroup of $A^*$ under the canonical projection and that is as large as possible. It is tempting to consider all the elements in $G$ that have at least one preimage of finite order in~$A^*$, however it is a priori unclear neither that they form a subgroup in $G$, nor that this subgroup itself admits a finite lift. Surprisingly, both of the assertions turn out to be true. To prove them we use the existence of the reduced norm map $\Nrd_A \colon A^* \to k^*$, and the fact  that its kernel restricted to the base field $k^* \subset A^*$ is finite. We refer to \cite[Sect. 2.6]{GS} for more details. We denote by~$f_A \colon A^* \to A^*/k^*$ the canonical projection. \\[-0.7 cm]

$$\xymatrix{
A^* \ar[r]^{\Nrd} \ar[d]^{f_A} & k^* \\
A^*/k^*
}$$

We start with the following proposition.

\begin{prop} \label{finite_order}
Let $A$ be a central simple algebra over a field $k$ and let $G$ be a finite subgroup of $A^*/k^*$. Then an element $g \in f_A^{-1}(G) \subset A^*$ is of finite order if and only if its reduced norm is a root of unity.
\end{prop}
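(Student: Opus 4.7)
The plan is to prove the two implications separately, and the forward direction is immediate from multiplicativity of the reduced norm: if $g^m = 1$ then $\Nrd_A(g)^m = \Nrd_A(g^m) = \Nrd_A(1) = 1$, so $\Nrd_A(g)$ is an $m$-th root of unity.

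For the converse, I would use finiteness of $G$ to reduce to a statement about scalars. Since $f_A(g) \in G$ has finite order, say $e$ (dividing $|G|$), one has $g^e = \lambda$ for some $\lambda \in k^* \subset A^*$. Hence $g$ itself has finite order if and only if $\lambda$ does, so the task reduces to showing that $\lambda$ is a root of unity in $k^*$.

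The key tool is the behaviour of the reduced norm on the centre: for any scalar $\lambda \in k^* \subset A^*$ one has $\Nrd_A(\lambda) = \lambda^n$, where $n = \deg A$ (this is the identity that underlies the paper's remark that $\ker \Nrd_A \cap k^* = \mu_n(k)$ is finite, since it is cut out by $x^n - 1$). Applying this to $\lambda = g^e$ gives
$$\lambda^n = \Nrd_A(g^e) = \Nrd_A(g)^e.$$
By hypothesis $\Nrd_A(g)$ is a root of unity in $k^*$, so the right-hand side is a root of unity, say of order $N$. Then $\lambda^{nN} = 1$, so $\lambda$ is itself a root of unity in $k^*$, and $g^{enN} = \lambda^{nN} = 1$, as required.

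There is no real obstacle here; the argument is a short three-step chain (finite quotient $\Rightarrow$ scalar power; scalar power seen by $\Nrd$; roots of unity close under $n$-th roots in $k^*$). The only subtle point worth emphasising is the formula $\Nrd_A|_{k^*}(\lambda) = \lambda^n$, which is what allows the transfer of finiteness information from $\Nrd_A(g)$ back to a power of $g$ itself; after that, one merely uses that $x \mapsto x^n$ has finite fibres on the torsion subgroup of $k^*$.
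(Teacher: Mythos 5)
Your proof is correct and follows essentially the same route as the paper's: pass to a power $g^e$ that lands in $k^*$, then use that $\Nrd_A$ restricted to the centre is $\lambda \mapsto \lambda^n$ (equivalently, has finite kernel $\mu_n(k)$) to conclude that this scalar is a root of unity. You merely make explicit the formula $\Nrd_A(\lambda)=\lambda^n$ where the paper only cites finiteness of the kernel; the argument is the same.
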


\begin{proof}
Clearly, the reduced norm of an element of finite order is a root of unity. To prove the inverse, note that as $f_A(g) \in G$ is an element a finite order and $\ker f_A = k^*$, there exists a positive integer $\alpha$ such that $g^\alpha$ lies in $k^*$. As $g$ has a reduced norm of finite order, so does $g^\alpha$, hence there exists a positive integer $\beta$ such that $(g^\alpha)^\beta$ is an element of $k^*$ of reduced norm $1$. However, as mentioned above, the reduced norm map restricted to the base field $k^* \subset A^*$ has finite kernel, thus the element $g^{\alpha \beta}$ is of finite order in $A^*$.
\end{proof}

\begin{corollary} \label{N_G}
Let $A$ be a central simple algebra over a field $k$ and let $G$ be a finite subgroup of~$A^*/k^*$. Then all the elements of $G$ that admit a lift of finite order in $A^*$ form a normal subgroup $N_G \lhd G$, such that $G/N_G$ is abelian.
\end{corollary}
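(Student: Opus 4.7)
The plan is to exploit Proposition~\ref{finite_order} so that checking membership in $N_G$ reduces to checking whether a reduced norm is a root of unity, and then use the multiplicativity of $\Nrd_A$ together with the commutativity of $k^*$ to verify the group-theoretic properties.

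First I would observe that by Proposition~\ref{finite_order}, the set $N_G$ can equivalently be described as the image under $f_A$ of the set
$$S = \{g \in f_A^{-1}(G) \mid \Nrd_A(g) \text{ is a root of unity in } k^*\}.$$
Using multiplicativity of $\Nrd_A$, I would check that $S$ is closed under products and inverses (if $\Nrd_A(g)$ and $\Nrd_A(h)$ are roots of unity, then so are $\Nrd_A(gh) = \Nrd_A(g)\Nrd_A(h)$ and $\Nrd_A(g^{-1})$), so $S$ is a subgroup of $f_A^{-1}(G)$, and hence $N_G = f_A(S)$ is a subgroup of $G$.

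Next I would verify normality. Given any $h \in f_A^{-1}(G)$ and any $g \in S$, the element $hgh^{-1}$ lifts a conjugate in $G$ and satisfies
$$\Nrd_A(hgh^{-1}) = \Nrd_A(h)\Nrd_A(g)\Nrd_A(h)^{-1} = \Nrd_A(g),$$
since $\Nrd_A$ takes values in the abelian group $k^*$. Thus $hgh^{-1} \in S$, and projecting down gives $N_G \lhd G$.

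Finally, for abelianness of $G/N_G$, I would show that every commutator in $G$ lies in $N_G$: for any $g, h \in f_A^{-1}(G)$, the commutator $[g,h] = ghg^{-1}h^{-1}$ satisfies $\Nrd_A([g,h]) = 1$ by multiplicativity and commutativity of $k^*$, which is certainly a root of unity. Hence $[g,h] \in S$, so the image of any commutator of $G$ lies in $N_G$, which gives the conclusion. I do not expect any serious obstacle here, as every step is a direct application of the multiplicativity of $\Nrd_A$ combined with Proposition~\ref{finite_order}; the only subtlety is remembering to pick lifts in $f_A^{-1}(G)$ rather than arbitrary elements of $A^*$, which is automatic in the above arguments.
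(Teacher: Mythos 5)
Your proposal is correct and follows essentially the same route as the paper: both reduce everything to the reduced-norm criterion of Proposition~\ref{finite_order} and exploit that $\Nrd_A$ lands in the abelian group $k^*$, so that $N_G$ is the image of the subgroup $f_A^{-1}(G)\cap\Nrd_A^{-1}(H)$ (your $S$) with $H$ the roots of unity. The only cosmetic difference is in the last step, where you check directly that commutators have reduced norm $1$, while the paper observes that $f_A^{-1}(G)/S$ embeds via $\Nrd_A$ into the abelian group $k^*/H$ — these are the same fact stated two ways.
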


\begin{proof}
Let $H \subset k^*$ be a group of all roots of unity in $k^*$. Denote by $N_G \subset G$ the group $f_A(\Nrd_A^{-1}(H))\cap G$ of all elements that admit a preimage whose reduced norm is in $H$. By Proposition~\ref{finite_order} those are exactly the elements that admit a lift of finite order in $A^*$. This property is clearly preserved under conjugation, thus $N_G$ is a normal subgroup of $G$. It remains to prove that $G/N_G$ is abelian.

Note that $\Nrd_A$ defines an isomorphism of $f_A^{-1}(G)/(f_A^{-1}(G) \cap \Nrd_A^{-1}(H))$ with a subgroup of~$k^*/H$, therefore $f_A^{-1}(G)/(f_A^{-1}(G) \cap \Nrd_A^{-1}(H))$ is abelian. As $f_A^{-1}(G)$ contains the kernel of $f_A$, the image under $f_A$ of $f_A^{-1}(G) \cap \Nrd_A^{-1}(H)$ is exactly $N_G$. Thus, $f_A$ induces a surjection $$f_A^{-1}(G)/(f_A^{-1}(G) \cap \Nrd_A^{-1}(H)) \to G/N_G,$$ which completes the proof.
\end{proof}

We now want to prove that there exists a finite subgroup $\widetilde{N}_G \subset A^*$ such that \mbox{$f_A(\widetilde{N}_G) = N_G$.} Note that the group $\Nrd_A^{-1}(H)\cap f_A^{-1}(G)$ that consists of all the elements of finite order in the preimage of~$G$, which is an obvious candidate for $\widetilde{N}_G$, is not always finite, as the field $k$ might contain an infinite number of roots of unity. In what follows we show that in order to lift $N_G$ it is enough to consider a finite subgroup $H'$ of $H$, thus the finite lift $\widetilde{N}_G$ will be defined as~$\Nrd_A^{-1}(H')\cap f_A^{-1}(G)$.

\begin{prop} \label{N_GLift}
Let $A$ be a central simple algebra over a field $k$. Let $G$ be a finite subgroup of~$A^*/k^*$ and let $N_G$ be the subgroup of $G$ constructed as in Corollary~\ref{N_G}. Then there exists a finite group $\widetilde{N}_G \subset A^*$ such that $f_A(\widetilde{N}_G) = N_G$.
\end{prop}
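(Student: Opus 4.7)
The plan is to follow the hint in the paragraph just before the statement: replace the full roots-of-unity subgroup $H \subset k^*$ by a sufficiently large finite subgroup $H'$, and show that $\widetilde{N}_G := \Nrd_A^{-1}(H') \cap f_A^{-1}(N_G)$ does the job.

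First I would pick, for every element $g \in N_G$, a lift $\tilde g \in A^*$ of finite order (which exists by the definition of $N_G$ together with Proposition~\ref{finite_order}). Let $H' \subset H$ be the subgroup of $k^*$ generated by the finite collection $\{\Nrd_A(\tilde g) : g \in N_G\}$; since each generator is a root of unity and there are finitely many of them, $H'$ is a finite cyclic group of roots of unity.

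Next I would set $\widetilde{N}_G := \Nrd_A^{-1}(H') \cap f_A^{-1}(N_G)$, which is a subgroup of $A^*$ as an intersection of preimages of subgroups under homomorphisms. By construction each chosen lift $\tilde g$ lies in $\widetilde{N}_G$, so $f_A(\widetilde{N}_G) \supset N_G$; the reverse inclusion is automatic from $\widetilde{N}_G \subset f_A^{-1}(N_G)$. Hence $f_A(\widetilde{N}_G) = N_G$.

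It then remains to show that $\widetilde{N}_G$ is finite. Since $f_A(\widetilde{N}_G) = N_G$ is finite, it suffices to bound the kernel $\widetilde{N}_G \cap \ker f_A = \widetilde{N}_G \cap k^*$. On $k^*$ the reduced norm is the $n$-th power map, where $n = \degr A$, so $\widetilde{N}_G \cap k^* = \{c \in k^* : c^n \in H'\}$. If $H'$ has exponent $m$, this set is contained in the group of $(nm)$-th roots of unity in $k$, which is finite. Combining the finite kernel with the finite image shows $\widetilde{N}_G$ is finite.

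The only non-routine point I anticipate is the last finiteness step: the obvious candidate $\Nrd_A^{-1}(H) \cap f_A^{-1}(G)$ fails precisely because $H$ can be infinite, and the argument above works only because $\Nrd_A|_{k^*}$ is the $n$-th power map, so preimages under it of a \emph{finite} group of roots of unity remain roots of unity. The whole proof hinges on noticing that choosing $H'$ large enough to contain the norms of a system of lifts, but still finite, is possible.
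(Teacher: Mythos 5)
Your proof is correct and follows essentially the same route as the paper: generate a finite group $H'$ from the reduced norms of chosen finite-order lifts, take $\widetilde{N}_G$ to be the intersection of $\Nrd_A^{-1}(H')$ with the preimage of the group, and deduce finiteness from the finiteness of both the image of $f_A|_{\widetilde{N}_G}$ and of $\widetilde{N}_G\cap k^*$ (your explicit use of $\Nrd_A|_{k^*}$ being the $n$-th power map is just a sharper form of the paper's remark that this restriction has finite kernel). The only cosmetic difference is that you intersect with $f_A^{-1}(N_G)$ rather than $f_A^{-1}(G)$, which lets you skip the paper's appeal to Proposition~\ref{finite_order} for the reverse inclusion.
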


\begin{proof}
For every element $x_i \in N_G$ choose one of its preimages $g_i \in f_A^{-1}(x_i)$ of finite order. Denote by $h_i$ the reduced norm of $g_i$. As every $h_i$ is a root of unity, the subgroup $H'$ of $k^*$ generated by all the $h_i$ is finite. Moreover, by construction $N_G$ lies in $f_A(\Nrd^{-1}_A(H'))$, thus Proposition~\ref{finite_order} yields $$N_G = f_A(\Nrd^{-1}_A(H'))\cap G.$$

Denote by $\widetilde{N}_G$ the group $f_A^{-1}(G)\cap\Nrd^{-1}_A(H')$. As mentioned above,~\mbox{$f_A(\widetilde{N}_G) = N_G$,} thus it remains to prove that $\widetilde{N}_G$ is finite. Note that the kernel of $\Nrd_A$ restricted to $k^*$ is finite, as well as the image of $\widetilde{N}_G$ under $\Nrd_A$. Hence the group $\widetilde{N}_G \cap k^*$ is also finite. However $\widetilde{N}_G \cap k^*$ is exactly the kernel of $f_A$ restricted to $\widetilde{N}_G$. Since the kernel and the image of $f_A$ restricted to $\widetilde{N}_G$ are finite, the group $\widetilde{N}_G$ is finite as well.
\end{proof}

We can now prove Corollary \ref{A5}.

\begin{proof}[Proof of Corollary \ref{A5}]
Let $G$ be a finite simple non-abelian subgroup of $A^*/k^*$. By Corollary~\ref{N_G} the group $N_G$ of elements of $G$ admitting a lift of finite order is a normal subgroup of $G$. Therefore, either~$G = N_G$ or $N_G$ is trivial. The latter means that $G$ is abelian, so consider the case when~\mbox{$G = N_G$.} Proposition~\ref{N_GLift} then yields that there exists a finite subgroup $\widetilde{G} \subset A^*$ such that~\mbox{$G = f_A(\widetilde{G})$.} As $G$ is simple and non-abelian, the group $\widetilde{G}$ is non-solvable. 

Denote by $\mathcal{I}^*$ the binary icosahedral group, which is a central extension of $\mathfrak{A}_5$ by $\Z/2\Z$, as in~\mbox{\cite[(5E)]{Amitsur}}. According to \cite[Cor. 4]{Amitsur} the group $\mathcal{I}^*$ is the only non-solvable group that can be a finite subgroup of a division algebra, which completes the proof.
\end{proof}

\section{p-subgroups of automorphisms of division algebras}

In this section for a division algebra $A$ we discuss the structure of finite $p$-subgroups of $A^*/k^*$ that can be lifted to finite subgroups of $A^*$. Using the results of Amitsur about the structure of finite subgroups of multiplicative groups of division algebras, we deduce that those $p$-subgroups are always cyclic for $p \geqslant 3$. We then prove a lemma that will serve us in Section~\ref{mainsect} to prove the main result.

Recall that a \textit{generalized quaternion group} $Q_{2^m}$ is the group given by generators and relations as follows: $$Q_{2^m} = \langle \, \x, \y \mid \x^{2^{m}} = 1, \; \x^{2^{m - 1}} = \y^2, \; \y \x \y^{-1} = \x^{-1}\rangle.$$ 

\begin{remark} \label{Q}
Note that its center $Z(Q_{2^m})$ is generated by $\y^2$ and the quotient $Q_{2^m}/Z(Q_{2^m})$ is isomorphic to the dihedral group $D_{2^m}$, i.\,e. the semidirect product~~\mbox{$\Z/2^{m}\Z \rtimes \Z/2\Z$,} where the action of $\Z/2\Z$ on $\Z/2^{m}\Z$ is given by $a \mapsto a^{-1}$.
\end{remark}

The proof of the following can be found in \cite[Thr. 2]{Amitsur}.

\begin{prop} \label{p-groupsAbove}
Let $A$ be a division algebra. Then all finite $p$-subgroups of $A^*$ are cyclic for every prime $p \ne 2$, and every finite $2$-subgroup of $A^*$ is either cyclic or isomorphic to a generalized quaternion group $Q_{2^m}$.
\end{prop}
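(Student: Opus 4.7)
The plan is to exploit two special features of a division algebra~$A$. First, any finite abelian subgroup $B \subset A^*$ is cyclic: the subalgebra $k[B] \subset A$ is commutative, finite-dimensional over $k$, and a domain (being a subring of a division algebra), hence a field; the finite subgroup $B$ of its multiplicative group must therefore be cyclic. Second, $-1$ is the unique element of order~$2$ in $A^*$: any $x \in A^*$ with $x^2 = 1$ satisfies $(x-1)(x+1) = 0$, and the absence of zero divisors forces $x = \pm 1$.

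Let $G \subset A^*$ be a finite $p$-group. The key reduction is to show that $G$ has a unique subgroup of order~$p$. For $p = 2$ this follows at once from the second observation. For odd $p$, the center $Z(G)$ of the nontrivial $p$-group $G$ is nontrivial, so it contains a subgroup $\langle z \rangle$ of order~$p$. Suppose $G$ contained a second subgroup $\langle y \rangle$ of order~$p$; since $z$ is central in $G$, the subgroup $\langle z, y \rangle \subset A^*$ would be abelian and contain two distinct subgroups of order~$p$, hence could not be cyclic, contradicting the first observation.

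With uniqueness of the subgroup of order $p$ in hand, I would invoke the classical structure theorem for such $p$-groups: a finite $p$-group with a unique subgroup of order $p$ is cyclic if $p$ is odd, and is either cyclic or isomorphic to some generalized quaternion group $Q_{2^m}$ if $p = 2$. This yields the conclusion.

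The main obstacle is the classical theorem invoked at the end, whose standard proof proceeds by induction on~$|G|$: one passes to the quotient by the central subgroup of order~$p$, which inherits the uniqueness property and is therefore cyclic or generalized quaternion by induction, and then reconstructs $G$ via a cohomological extension argument. Since the statement is purely group-theoretic and well documented, I would cite rather than reprove it; it is in any case subsumed by the more general Amitsur classification of \cite{Amitsur} that the paper is already using.
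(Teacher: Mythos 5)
Your proposal is correct, but it takes a different route from the paper: the paper does not prove this proposition at all, it simply cites it as Theorem~2 of Amitsur's classification of finite subgroups of division rings. What you have written is essentially a self-contained reconstruction of the classical argument behind that citation. Both of your key observations are sound: the subring generated over the (central) base field by a finite abelian subgroup $B$ is a commutative finite-dimensional domain, hence a field, so $B$ is cyclic; and $(x-1)(x+1)=0$ forces $x=-1$ for any element of order~$2$ (in characteristic~$2$ there is no element of order~$2$ at all, so finite $2$-subgroups are trivial and the claim is vacuous there). Your reduction to uniqueness of the subgroup of order~$p$ via the center is the standard trick and is carried out correctly, and the group-theoretic classification you invoke at the end (a finite $p$-group with a unique subgroup of order $p$ is cyclic for $p$ odd, and cyclic or generalized quaternion for $p=2$) is a well-documented theorem that is reasonable to cite rather than reprove. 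What your approach buys is transparency and independence from the much heavier machinery of Amitsur's paper; what the paper's approach buys is brevity, and consistency with the fact that Amitsur's results are needed elsewhere in the text anyway (e.g.\ in the proof of Corollary~\ref{A5}), so the citation comes at no extra cost.
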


Let us now prove an analogous statement for $p$-subgroups of automorphism groups of central division algebras that can be lifted to finite subgroups of division algebras.

\begin{prop} \label{p-groups}
Let $A$ be a central division algebra over a field $k$. Then every finite $p$-subgroup $N$ of $A^*/k^*$  that can be lifted to a finite group $\widetilde{N} \subset A^*$ is cyclic for $p \ne 2$. When $p = 2$, it is either cyclic or isomorphic to a dihedral group $D_{2^m}$.
\end{prop}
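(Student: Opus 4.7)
The plan is to reduce to Proposition~\ref{p-groupsAbove} by replacing $\widetilde{N}$ with an appropriate $p$-subgroup. Set $C := \widetilde{N} \cap k^*$, the kernel of $f_A|_{\widetilde{N}}$; it is finite and central in $\widetilde{N}$ since $k^*$ lies in the center of $A^*$. Let $C_p$ denote its Sylow $p$-subgroup, and choose a Sylow $p$-subgroup $P$ of $\widetilde{N}$ containing $C_p$, which is possible because $C_p$ is a central $p$-subgroup of $\widetilde{N}$. Comparing $p$-parts in the equality $|\widetilde{N}| = |C| \cdot |N|$ gives $|P| = |N| \cdot |C_p|$, while $P \cap C = C_p$. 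Hence $|f_A(P)| = |P|/|C_p| = |N|$ and so $f_A(P) = N$. It thus suffices to analyze the finite $p$-subgroup $P \subset A^*$.

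By Proposition~\ref{p-groupsAbove}, either $P$ is cyclic, or $p = 2$ and $P \cong Q_{2^m}$ for some $m$. In the cyclic case, $N = f_A(P)$ is a quotient of a cyclic group and therefore cyclic, which settles the statement for $p \neq 2$ as well as part of the case $p = 2$.

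It remains to treat the case $P \cong Q_{2^m}$. The key observation is that, in the presentation of Remark~\ref{Q}, the element $\y^2$ is the unique element of order $2$ in $Q_{2^m}$ and generates $Z(P)$. Any element of order $2$ in the division algebra $A$ is a root of $t^2 - 1$, hence equals $\pm 1$, so $\y^2 = -1 \in k^*$. Combined with the inclusion $P \cap k^* \subset Z(P)$, which holds because $k^*$ is central in $A^*$, this forces the equality $P \cap k^* = Z(P)$. Therefore $N \cong P/Z(P) \cong D_{2^m}$ by Remark~\ref{Q}, as desired.

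The only conceptually delicate point I expect is the last one: pinning down that $P \cap k^*$ equals the \emph{full} center $Z(P)$ and is not merely contained in it. This is precisely what rules out the potential quotient $N \cong Q_{2^m}$ and explains why, when one descends from $A^*$ to $A^*/k^*$, generalized quaternion groups are replaced by the corresponding dihedral groups.
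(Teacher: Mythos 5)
Your proof is correct and follows essentially the same route as the paper: pass to a Sylow $p$-subgroup of $\widetilde{N}$ still surjecting onto $N$, apply Proposition~\ref{p-groupsAbove}, and in the generalized quaternion case identify the unique order-$2$ central element with $-1\in k^*$ to conclude that the kernel of $f_A|_P$ is exactly $Z(P)$, whence $N\cong D_{2^m}$. Your explicit order count establishing $f_A(P)=N$ is just a spelled-out version of the paper's remark that the image of a Sylow $p$-subgroup under a surjection is a Sylow $p$-subgroup of the image.
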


\begin{proof}
Note that for every prime $p$ the image of a Sylow $p$-subgroup under a surjection is a Sylow $p$-subgroup of the image. Hence, the group $N$ is the image of a $p$-subgroup of $\widetilde{N}$, thus we can suppose that $\widetilde{N}$ is itself a $p$-group. Therefore, for every $p \ne 2$, the group $N$ admits a surjection from a cyclic group $\widetilde{N}$ by Proposition~\ref{p-groupsAbove}, thus it itself is cyclic. 

It remains to consider the case when $p=2$ and $\widetilde{N}$ is a generalized quaternion group $Q_{2^m}$. Note that the kernel of the canonical projection $f_A \colon A^* \to A^*/k^*$ is the center of $A^*$, so the kernel of $f_A$ restricted to $\widetilde{N}$ is a subgroup of the center $Z(\widetilde{N})$. As mentioned in Remark \ref{Q}, the center of $Q_{2^m}$ is generated by the element of order $2$. Denote it by $x \in A^*$ and note that $x^2 - 1 = (x-1)(x+1) = 0$, thus, as $A$ is a division algebra, $x = -1$ and so is an element of $k$. Therefore $$N = f_A(\widetilde{N}) \cong Q_{2^m}/Z(Q_{2^m}) \cong D_{2^m},$$ see Remark \ref{Q}.
\end{proof}

\begin{remark} \label{conj}
Let $A$ be a central division algebra over a field $k$. Note that the action of $A^*$ on $A$ by conjugations factors through the action of $A^*/k^*$. To simplify the terminology, we will sometimes refer to the resulting action as the action of $A^*/k^*$ on $A$ by conjugations.
\end{remark}

The following lemma is crucial for the proof of Theorem~\ref{main}.

\begin{lemma} \label{subfield}
Let $A$ be a central division algebra. Let $G$ be a finite $p$-subgroup of $A^*/k^*$ for a prime $p$ and let $N_G \subset G$ be defined as in Corollary~\ref{N_G}. Then there exists a field extension $K \supset k$ such that $K \subset A$ and the following conditions hold:
\begin{itemize}
\item $K$ is invariant under the action of $G$ on $A^*$ by conjugations;
\item all the preimages of all elements of $N_G$ that act trivially on $K$, lie in $K$, and $K$ is generated by such preimages. 
\end{itemize}
\end{lemma}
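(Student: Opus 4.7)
The plan is to apply Proposition~\ref{N_GLift} to obtain a finite lift $\widetilde{N}_G \subset A^*$ of $N_G$ and then use Proposition~\ref{p-groupsAbove} to split into cases according to whether $\widetilde{N}_G$ is cyclic or a generalized quaternion group (the latter only when $p=2$). In both scenarios I would take $K = k[\widetilde C]$, where $\widetilde C \subset \widetilde{N}_G$ is a cyclic subgroup chosen so that its image $C = f_A(\widetilde C) \subset N_G$ is stable under conjugation by $G$: the whole of $\widetilde{N}_G$ in the cyclic case, and a cyclic subgroup of index $2$ in the generalized quaternion case. Since $\widetilde C$ is abelian and $A$ is a division algebra, $K$ is automatically a subfield of $A$ containing $k$.

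When $\widetilde{N}_G$ is cyclic, every element of $N_G$ acts trivially on $K$ because its lifts commute with $\widetilde{N}_G$, every preimage of every element of $N_G$ lies in $k^*\widetilde{N}_G \subset K$, and $K$ is generated over $k$ by $\widetilde{N}_G$, which settles the second bullet. For $G$-invariance, the normality of $N_G$ in $G$ from Corollary~\ref{N_G} gives, for every lift $\tilde g \in A^*$ of $g \in G$ and every $\tilde n \in \widetilde{N}_G$, that $\tilde g \tilde n \tilde g^{-1}$ is a preimage of $g f_A(\tilde n) g^{-1} \in N_G$, hence an element of $k^*\widetilde{N}_G \subset K$.

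In the generalized quaternion case, if $|\widetilde{N}_G| \geqslant 16$ then all elements outside the maximal cyclic subgroup have order $4$, so $\widetilde C$ is characteristic in $\widetilde{N}_G$ and the invariance of $C$ follows automatically from normality. If $|\widetilde{N}_G| = 8$, then $N_G$ is the Klein four-group, and since $G$ is a $p$-group its action on $N_G$ factors through a $2$-subgroup of $\Aut(N_G) \cong \mathfrak{S}_3$, which has order at most $2$ and must fix at least one of the three involutions of $N_G$, providing a $G$-invariant cyclic subgroup $\widetilde C$ of order $4$. The verification of the two bullets then parallels the cyclic case; the additional point is that elements of $N_G \setminus C$ act non-trivially on $K$ because their lifts in $\widetilde{N}_G \setminus \widetilde C$ invert the generator of $\widetilde C$, and this inversion is non-trivial because $\widetilde C$ has order at least $4$ and is not contained in $k^*$ (otherwise $N_G$ itself would be cyclic).

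The main difficulty I expect is the case $|\widetilde{N}_G| = 8$, where no cyclic subgroup of index $2$ is distinguished a priori; the hypothesis that $G$ is a $p$-group is precisely what restricts its action on $N_G$ enough to single out a $G$-invariant one.
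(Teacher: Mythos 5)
Your proposal follows essentially the same route as the paper: reduce via the finite lift and Amitsur's classification to the cyclic versus generalized quaternion dichotomy, take $K$ generated by a distinguished cyclic piece, and handle the order-$8$ case separately by observing that the image of the $2$-group $G$ in $\Aut(\Z/2\Z\times\Z/2\Z)\cong\mathfrak{S}_3$ must fix an involution. Two small imprecisions are worth repairing. First, the group $\widetilde{N}_G$ produced by Proposition~\ref{N_GLift} need not be a $p$-group (it contains $\widetilde{N}_G\cap k^*$, a finite group of roots of unity of $k$ of arbitrary order), so Proposition~\ref{p-groupsAbove} does not apply to it directly; you must first replace it by a Sylow $p$-subgroup, whose image is still all of $N_G$ --- this is exactly the reduction packaged in Proposition~\ref{p-groups}, which you could cite instead. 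Second, in the case $|\widetilde{N}_G|\geqslant 16$, the $G$-invariance of $C$ does not follow from $\widetilde C$ being characteristic in $\widetilde{N}_G$, because $G$ acts by conjugation on $N_G$ but not on the chosen lift $\widetilde{N}_G$; the correct statement is that $C$ is characteristic in the dihedral group $N_G$ as its unique cyclic subgroup of index~$2$, which is how the paper argues. The one place where you genuinely diverge is the verification that elements of $N_G\setminus C$ act non-trivially on $K$: you argue directly that such elements invert a generator of order at least $4$, whereas the paper argues by contradiction (a trivial action would force all of $f_A^{-1}(N_G)$ into a field, making $\widetilde{N}_G$ cyclic). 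Your direct argument is a legitimate, arguably cleaner, alternative; everything else coincides with the paper's proof once the two points above are patched.
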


\begin{proof}
Suppose first that $p \ne 2$. Then $N_G$ is cyclic by Proposition \ref{p-groups}. Let $K \supset k$ be a subalgebra of $A$ generated by $f_A^{-1}(N_G)$. Note that it is commutative as a central extension of a cyclic group. Therefore, as $A$ is a division algebra, $K$ is a subfield. Then, as $N_G$ is stable under the action of $G$ by conjugations, so is $f_A^{-1}(N_G)$ and consequently $K$. The second condition is satisfied automatically.

Now consider the case when $p = 2$. By Proposition \ref{p-groups} the group $N_G$ is either cyclic or isomorphic to the dihedral group $D_{2^m}$. In the first case the construction is the same as in the case of odd $p$. Namely, take $K$ to be the field generated by~$f_A^{-1}(N_G)$. Therefore, consider the second case. 

If $m \ne 1$, there is only one cyclic subgroup of $N_G$ of order $2^m$, so it is characteristic, that is, preserved by all automorphisms of $N_G$. Denote it by $N_G'$. In the case when $m = 1$ the group $N_G$ is isomorphic to $\Z/2\Z \times \Z/2\Z$. Consider an action of $G$ on $N_G$ by conjugations. As $G$ is a $2$-group, its image in $\Aut(N_G) \cong S_3$ is either trivial or a subgroup generated by a transposition, thus there exists a subgroup $N_G'\cong \Z/2\Z \subset N_G$ stable under conjugations by elements of $G$. In both cases $m = 1$ and $m \ne 1$ we have obtained a normal cyclic subgroup $N_G' \lhd G$ of index $2$ in $N_G$.

Let $K$ be the field generated by $f_A^{-1}(N_G') \subset A^*$. Since $N_G'$ is stable under conjugations by elements of $G$, so is~$K$. Now let us prove that no elements of $N_G$, except for the elements of $N_G'$, act trivially on~$K$. Suppose there exists an element $x \in N_G$ such that $x \not \in N_G'$ and $x$ acts trivially on~$K$. Since $N_G/N_G' \cong \Z/2\Z$, the group $N_G$ is generated by $N_G'$ and $x$, hence there exists a field extension $L \supset K$ generated by the preimage $f_A^{-1}(N_G)$. Consider a finite group $\widetilde{N}_G \subset A^*$ such that $f_A(\widetilde{N}_G) = N_G$. Since $\widetilde{N}_G \subset L^*$, we know that $\widetilde{N}_G$ is cyclic, hence so is $N_G$, which leads to a contradiction.

As no element of $N_G$, except for the elements of $N_G'$, act trivially on $K$, the second condition required in the assertion of the lemma holds.
\end{proof}

\section{Field extensions}
The goal of this section is to prove the following proposition, which, in view of the discussion in Section \ref{N_Gsect}, can be considered as a field theory analog of Theorem \ref{main}. For a given finite field extension $L \supset K$ we denote by $[L : K]$ the degree of the extension and by $\tr_{L/K}$ the trace map from $L$ to $K$. 

\begin{prop} \label{GaloisMain}
Consider a finite field extension $L \supset k$ of degree coprime with characteristic and let $f_L \colon L^* \to L^*/k^*$ be the canonical projection. Consider a finite subgroup $\mathcal{A}$ of $L^*/k^*$. Denote by $\mathcal{B}$ the subgroup of $\mathcal{A}$  consisting of all elements $x \in \mathcal{A}$ admitting a lift $\widetilde{x} \in f^{-1}(x)$ of finite order. Let $K \subset L$ be a field generated by~$f^{-1}(\mathcal{B})$. Then $[L : K]$ is divisible by $ |\mathcal{A}/\mathcal{B}|$.
\end{prop}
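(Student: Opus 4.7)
The plan is to argue by induction on $|\mathcal{A}/\mathcal{B}|$, extracting at each step a Kummer-type subextension of $L/K$ of prime degree. After replacing $L$ by the subfield $k(f_L^{-1}(\mathcal{A})) \subseteq L$, which still contains $K$, I may assume $L$ is generated over $k$ by the lifts of $\mathcal{A}$. If $\mathcal{A} = \mathcal{B}$ the claim is trivial; otherwise, since $\mathcal{A}/\mathcal{B}$ is abelian by Corollary~\ref{N_G}, choose $x \in \mathcal{A}$ whose image in $\mathcal{A}/\mathcal{B}$ has prime order $p$ and lift it to $g \in f_L^{-1}(x)$. Then $x^p \in \mathcal{B}$ gives $g^p \in f_L^{-1}(\mathcal{B}) \subseteq K^*$, so $g$ is a root of $y^p - g^p \in K[y]$ and $[K(g):K] \in \{1,p\}$.

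The central claim is that $[K(g):K] = p$, which is equivalent to $g \notin K^*$ and reduces to the field-theoretic lemma $K^* \cap f_L^{-1}(\mathcal{A}) = f_L^{-1}(\mathcal{B})$: were $g \in K^*$, this lemma would force $x \in \mathcal{B}$, contradicting the primality $p > 1$. To prove the lemma one uses that $K$ is generated over $k$ by roots of unity and is hence an abelian Galois (cyclotomic) extension of $k$. For $g \in K^*$ with $g^n \in k^*$, the assignment $\sigma \mapsto \sigma(g)/g$ is a $1$-cocycle $\Gal(K/k) \to \mu_n(K)$. A Hilbert~$90$ style computation on this cocycle, combined with the separability of $L/k$ provided by the coprimality of $[L:k]$ with the characteristic, yields a decomposition $g = c\zeta$ with $c \in k^*$ and $\zeta \in \mu_L$, placing $g$ into $f_L^{-1}(\mathcal{B})$.

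Once $[K(g):K] = p$ is established, apply the inductive hypothesis to the pair $\mathcal{A}$, $\mathcal{B}' := \langle \mathcal{B}, x\rangle$ over the enlarged field $K' := K(g)$. Since $k(f_L^{-1}(\mathcal{B}')) \subseteq K'$ by construction, induction yields $|\mathcal{A}/\mathcal{B}'| \mid [L:K']$; multiplying by $p = [K':K]$ gives $|\mathcal{A}/\mathcal{B}| = p \cdot |\mathcal{A}/\mathcal{B}'|$ dividing $[L:K]$, completing the induction.

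The main obstacle is the field-theoretic lemma $K^* \cap f_L^{-1}(\mathcal{A}) = f_L^{-1}(\mathcal{B})$, the delicate bridge between the finite-order condition defining $\mathcal{B}$ and the field-containment condition imposed by $K \subseteq L$. Its proof is where the separability of $L/k$ and the cyclotomic structure of $K/k$ must both be leveraged, likely via a careful Hilbert~$90$ computation on the cocycle $\sigma \mapsto \sigma(g)/g$ with values in the finite cyclic module of roots of unity of appropriate order.
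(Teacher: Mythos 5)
Your route is genuinely different from the paper's: you propose to climb from $K$ to $L$ by prime-degree Kummer steps, whereas the paper pushes $\mathcal{A}$ forward along $L^*\to L^*/K^*$ and applies Proposition~\ref{GaloisAdd} to the image in a single step. But you have correctly isolated the crux, and that is exactly where your argument cannot be completed: the lemma $K^*\cap f_L^{-1}(\mathcal{A})=f_L^{-1}(\mathcal{B})$ is \emph{false}, and no Hilbert~90 computation will establish it. Hilbert~90 gives the vanishing of $H^1(\Gal(K/k),K^*)$; what you actually need is that the class of the cocycle $\sigma\mapsto\sigma(g)/g$ vanishes in $H^1(\Gal(K/k),\mu(K))$, and that group is in general nonzero. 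Concretely, take $k=\mathbb{Q}$, $L=\mathbb{Q}(\zeta_8)$ and $\mathcal{A}=\langle\overline{\zeta_8},\overline{\sqrt{2}}\rangle\subset L^*/\mathbb{Q}^*$, where $\sqrt{2}=\zeta_8+\zeta_8^{-1}\in L$. Then $\mathcal{A}\cong\Z/4\Z\times\Z/2\Z$; since the roots of unity of $L$ are exactly $\mu_8$ and no rational multiple of $\sqrt{2}$ has absolute value $1$, one gets $\mathcal{B}=\langle\overline{\zeta_8}\rangle$ and hence $K=\mathbb{Q}(\zeta_8)=L$. So $g=\sqrt{2}$ lies in $K^*$ and lifts an element of $\mathcal{A}\setminus\mathcal{B}$, yet it is not of the form $c\zeta$ with $c\in\mathbb{Q}^*$ and $\zeta\in\mu_8$; your cocycle sends $\zeta_8\mapsto\zeta_8^3$ and $\zeta_8\mapsto\zeta_8^5$ both to $-1$, and one checks directly that this is not a coboundary with values in $\mu_8$.

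The same example shows the difficulty is not one you can engineer around: it refutes Proposition~\ref{GaloisMain} as stated, since $[L:K]=1$ while $|\mathcal{A}/\mathcal{B}|=2$. For what it is worth, the paper's own proof stalls at the identical point: it shows that $|\mathcal{C}|$ divides $|\mathcal{A}/\mathcal{B}|$ and that $|\mathcal{C}|$ divides $[L:K]$, where $\mathcal{C}$ is the image of $\mathcal{A}$ in $L^*/K^*$, and this yields the stated conclusion only if $\mathcal{C}\cong\mathcal{A}/\mathcal{B}$ --- which is precisely your lemma. So what either route actually proves is the weaker assertion that $|\mathcal{C}|$ divides $[L:K]$. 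Two secondary remarks on your write-up: even granting the lemma, your inductive step invokes the proposition for the pair $(\mathcal{A},\mathcal{B}')$ over $K'=K(g)$, but $\mathcal{B}'$ need not be the full group of elements of $\mathcal{A}$ with finite-order lifts and $K'$ need not be generated by $f_L^{-1}(\mathcal{B}')$, so the induction hypothesis would have to be restated (and strengthened) to apply; and the coprimality of $[L:k]$ with the characteristic, which in the paper's argument is used through the trace computation of Proposition~\ref{GaloisAdd}, plays no visible role in your sketch.
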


\begin{remark}
Note that, unlike in Section \ref{N_Gsect}, all the groups considered here are commutative, so it is straightforward that all the elements admitting a finite lift form a subgroup.
\end{remark}

We begin by proving the following lemma.

\begin{lemma} \label{GaloisLemma}
Consider a finite field extension $L \supset k$ and let $f_L \colon L^* \to L^*/k^*$ be the canonical projection. Consider a finite subgroup $\mathcal{A}$ of $L^*/k^*$ and suppose that there are no non-trivial elements $x \in \mathcal{A}$  that admit a lift $\widetilde{x} \in f_L^{-1}(x)$ of finite order. Then for every non-trivial $x \in \mathcal{A}$ and for any $\widetilde{x}\in f_A^{-1}(x)$ the trace $\tr_{L/k}(\widetilde{x}) = 0$.
\end{lemma}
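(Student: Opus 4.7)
The plan is to compute the minimal polynomial of $\widetilde{x}$ over $k$ explicitly: I expect it to be $t^n - c$, where $n \geq 2$ is the order of $x$ in $\mathcal{A}$ and $c = \widetilde{x}^n \in k^*$. Once established, this immediately gives $\tr_{k(\widetilde{x})/k}(\widetilde{x}) = 0$ (the coefficient of $t^{n-1}$ in $t^n - c$ vanishes whenever $n \geq 2$), and the tower formula for traces then yields $\tr_{L/k}(\widetilde{x}) = [L : k(\widetilde{x})] \cdot \tr_{k(\widetilde{x})/k}(\widetilde{x}) = 0$, as required.

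To prove the claim about the minimal polynomial $p(t) \in k[t]$ of $\widetilde{x}$, first note that since $\widetilde{x}^n = c$, the polynomial $p(t)$ divides $t^n - c$ in $k[t]$, so $d' := \deg p \leq n$. Working in $\overline{k}$, factor $t^n - c = \prod_{i=0}^{n-1}(t - \widetilde{x}\zeta^i)$, where $\zeta$ is a primitive $n$-th root of unity (the hypothesis that the characteristic is coprime to $[L:k]$, inherited from the context of Proposition~\ref{GaloisMain}, ensures this root of unity exists and that $t^n - c$ is separable). Then $p(t) = \prod_{i \in J}(t - \widetilde{x}\zeta^i)$ for some $J \subseteq \{0, \ldots, n-1\}$ of cardinality $d'$, and the constant term of $p(t)$ equals $(-1)^{d'} \widetilde{x}^{d'} \zeta^s$ with $s = \sum_{i \in J} i$. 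Since this constant lies in $k$, there exists $c'' \in k^*$ with $\widetilde{x}^{d'} = c'' \zeta^{-s}$, so in particular $\zeta^{-s} = \widetilde{x}^{d'}/c''$ lies in $L$.

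The crux is then the observation that $\widetilde{x}^{d'}/c'' \in L^*$ is a root of unity, hence has finite order, and its image under $f_L$ equals $x^{d'}$; thus $x^{d'}$ admits a lift of finite order. The hypothesis of the lemma forces $x^{d'}$ to be trivial in $\mathcal{A}$, so $n \mid d'$; combined with $d' \leq n$, we conclude $d' = n$ and therefore $p(t) = t^n - c$.

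The main obstacle is this final deduction. The key insight is that the $k^*$-scaling ambiguity in choosing a lift of $x$ is precisely what allows the constant term of $p(t)$ to detect a root of unity in $L$: after dividing $\widetilde{x}^{d'}$ by the scalar $c'' \in k^*$, the nontrivial torsion part $\zeta^{-s}$ emerges as an element of $L$ and contradicts the hypothesis unless $x^{d'}$ is trivial. Modulo this step, the trace computation follows immediately from the shape of the minimal polynomial and the multiplicativity of trace in towers.
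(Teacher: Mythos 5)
Your proposal is correct and follows essentially the same route as the paper: both show that the minimal polynomial of $\widetilde{x}$ must be exactly $t^n-c$ by observing that its constant term differs from $\widetilde{x}^{\deg p}$ by a root of unity times a scalar in $k^*$, so that a proper divisor of $t^n-c$ would produce a finite-order lift of a non-trivial power of $x$, contradicting the hypothesis; the trace then vanishes by the tower formula. (The separability/primitive-root-of-unity discussion you add is harmless but not needed: the paper only uses that each root of $p(t)$ is an $n$-th root of $c$, so the constant term raised to the $n$-th power is $\pm c^{\deg p}$ regardless of multiplicities.)
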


\begin{proof}
Consider an element $x \in \mathcal{A}$ of order $\alpha$ and let $\widetilde{x} \in f_L^{-1}(x)$ be any of its preimages. Note that $\widetilde{x}^\alpha = a \in k$. Denote by $P(t)$ the minimal polynomial of $\widetilde{x}$. Let $m$ be its degree.

As $P(t)$ divides $t^\alpha - a$, its constant term $a_0$ is a product of $m$ roots of degree $\alpha$ of~$a$ up to a sign, so $a_0^\alpha = a^m$, hence $(\widetilde{x}^m/a_0)^\alpha = 1$ and thus $x^m$ admits a lift $\widetilde{x}^m/a_0$ of finite order. Therefore $m = \alpha$ and $P(t) = t^\alpha - a$.

Denote by $k(\widetilde{x}) \subset L$ the field subextension generated by $\widetilde{x}$ over $k$. Note that $\tr_{k(\widetilde{x})/k}(\widetilde{x}) = 0$ as the degree $1$ term coefficient of $P(t)$, therefore 
$$\tr_{L/k}(\widetilde{x}) = \tr_{k(\widetilde{x})/k}(\tr_{L/k(\widetilde{x})}(\widetilde{x})) = \tr_{k(\widetilde{x})/k}([L: k(\widetilde{x})]\cdot \widetilde{x}) = 0,$$

which concludes the proof.
\end{proof}

Let us now use Lemma \ref{GaloisLemma} to prove the special case of Proposition \ref{GaloisMain} when the group $\mathcal{B}$ is trivial.

\begin{prop} \label{GaloisAdd}
Consider a finite field extension $L \supset k$ of degree coprime with characteristic and let $f_L \colon L^* \to L^*/k^*$ be the canonical projection. Consider a finite subgroup $\mathcal{A}$ of $L^*/k^*$ and suppose that there are no non-trivial elements $x \in \mathcal{A}$  that admit a lift $\widetilde{x} \in f_L^{-1}(x)$ of finite order. Then $[L : k]$ is divisible by $|\mathcal{A}|$.
\end{prop}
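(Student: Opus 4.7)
The plan is to choose a system of lifts for the elements of $\mathcal{A}$, prove that they are $k$-linearly independent in $L$ using Lemma~\ref{GaloisLemma}, and then observe that their $k$-span is in fact a subfield of $L$, so that $|\mathcal{A}|$ appears as the degree of a subextension.

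For each $x \in \mathcal{A}$ pick a preimage $\widetilde{x} \in f_L^{-1}(x)$, choosing $\widetilde{e} = 1$ for the identity~$e$ of~$\mathcal{A}$. First I would establish that $\{\widetilde{x}\}_{x\in\mathcal{A}}$ is linearly independent over~$k$. Suppose $\sum_{x \in \mathcal{A}} a_x \widetilde{x} = 0$ with $a_x \in k$. For any fixed $y \in \mathcal{A}$, multiply by $\widetilde{y}^{-1}$ to obtain $\sum_{x} a_x \widetilde{y}^{-1}\widetilde{x} = 0$. Since $\widetilde{y}^{-1}\widetilde{x}$ is a preimage of $y^{-1}x$, the term with $x = y$ equals $1$, while for $x \ne y$ the element $y^{-1}x \in \mathcal{A}$ is non-trivial and admits no lift of finite order. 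Applying $\tr_{L/k}$ and using Lemma~\ref{GaloisLemma} kills every term except $x = y$, leaving $a_y \cdot [L:k] = 0$. The hypothesis that $[L:k]$ is coprime with the characteristic gives $a_y = 0$, proving linear independence.

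Next I would show that the $k$-subspace $V \subset L$ spanned by $\{\widetilde{x}\}_{x \in \mathcal{A}}$ is closed under multiplication. Indeed, for $x, y \in \mathcal{A}$ both $\widetilde{x}\widetilde{y}$ and $\widetilde{xy}$ lie in $f_L^{-1}(xy)$, so they differ by a scalar in $k^*$, whence $\widetilde{x}\widetilde{y} \in V$. Thus $V$ is a finite-dimensional $k$-subalgebra of the field~$L$, and as such is itself a field. Since $V$ coincides with the subfield $K' := k(f_L^{-1}(\mathcal{A})) \subset L$, we get $[K':k] = |\mathcal{A}|$, and multiplicativity of degrees in the tower $k \subset K' \subset L$ gives $|\mathcal{A}| \mid [L:k]$.

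The main conceptual step is the linear independence argument; once it is in hand, the passage from the inequality $[L:k] \geqslant |\mathcal{A}|$ to the divisibility statement is free, because the lifts automatically span a subfield. The only subtle point is the need to invoke the hypothesis on the characteristic so that $[L:k]$ is nonzero in $k$ when applying the trace.
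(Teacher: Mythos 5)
Your proposal is correct and follows essentially the same route as the paper: a linear-independence argument for a system of lifts via Lemma~\ref{GaloisLemma} and the trace (using the coprimality of $[L:k]$ with the characteristic to ensure $\tr_{L/k}(a)\ne 0$ for $a\in k^*$), followed by the observation that the $k$-span of the lifts is a subfield, so $|\mathcal{A}|$ divides $[L:k]$ by multiplicativity of degrees. The only cosmetic difference is that the paper normalizes the dependence relation by dividing by one fixed lift rather than multiplying by each $\widetilde{y}^{-1}$ in turn.
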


\begin{proof}

For every element $x_i \in \mathcal{A}$ take an arbitrary lift $\widetilde{x}_i \in f_L^{-1}(x_i)$. Let us prove that all the $\widetilde{x}_i$ are linearly independent over $k$. 

Consider the converse. Then there exists an equality of the form $a_0\widetilde{x}_0 + a_1\widetilde{x}_1 + \ldots + a_l\widetilde{x}_l = 0$ where $\widetilde{x}_i \in L$ and $a_i \in k$ are non-zero. Dividing by $\widetilde{x}_0$ we obtain 
\begin{equation} \label{eq1}
a_0 + a_1 \frac{\widetilde{x}_1}{\widetilde{x}_0} + \ldots + a_l\frac{\widetilde{x}_l}{\widetilde{x}_0} = 0.
\end{equation}
As $\frac{\widetilde{x}_i}{\widetilde{x}_0}$ is a preimage of $x_ix_0^{-1} \in \mathcal{A}$, Lemma \ref{GaloisLemma} yields $$\tr_{L/k}\left(\frac{\widetilde{x}_i}{\widetilde{x}_0}\right ) = 0.$$ 
Thus, calculating the trace of both sides of \eqref{eq1}, we obtain $\tr_{L/k}(a_0) = 0$, which leads to a contradiction as the degree of the field extension is coprime with characteristic. Therefore the elements $\widetilde{x}_i$ are linearly independent over $k$. 

Consider a vector space $V \subset L$ of dimension $|\mathcal{A}|$ generated by the elements $\widetilde{x}_i$ over $k$. Note that~$V = f_L^{-1}(\mathcal{A}) \cup 0$, thus it is an algebra and contains the inverses of all the elements, therefore it is a field. We thus conclude by $$[L : k] = [V : k][L : V] = |\mathcal{A}|[L : V].$$\\[-1 cm]
\end{proof}

We are now ready to prove the principal proposition of the section.

\begin{proof}[Proof of Proposition \ref{GaloisMain}]
Denote by $f \colon L^* \to L^*/K^*$ the canonical projection and let $\mathcal{C}$ be the image $f(\mathcal{A})$. Note that as $\mathcal{B} \subset K^*$ the map $f|_\mathcal{A}$ factors through $\mathcal{A}/\mathcal{B}$, therefore $|\mathcal{C}|$ divides $|\mathcal{A}/\mathcal{B}|$. We prove that the group $\mathcal{C} \subset L^*/K^*$ satisfies the conditions of Proposition \ref{GaloisAdd}. Indeed, consider $x \in \mathcal{C}$ and suppose $\widetilde{x} \in f^{-1}(x)$ is its preimage of finite order. However, $\mathcal{B}$ contains all the elements of $\mathcal{A}$ that can be lifted to elements of finite order in $L$, therefore $f_L(\widetilde{x})$ is an element of $\mathcal{B}$, and thus $f(\widetilde{x}) = 1$ in~$\mathcal{C}$. We then apply Proposition \ref{GaloisAdd} to conclude the proof.
\end{proof}

\section{The case of trivial N$_G$}

This section is devoted to the proof of the following proposition, which is a special case of Theorem~\ref{main}.

\begin{prop} \label{CV}
Let $A$ be a central division algebra of degree $n$ over a field $k$ of characteristic coprime with $n$. Let $G$ be a finite subgroup of $A^*/k^*$ such that the group $N_G$ defined in Corollary~\ref{N_G} is trivial. Then $|G|$ divides $n^2$.
\end{prop}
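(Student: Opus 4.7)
The plan is as follows. Since $N_G$ is trivial, Corollary~\ref{N_G} gives that $G = G/N_G$ is abelian. Given two lifts $\tilde x, \tilde y \in \widetilde G := f_A^{-1}(G)$, the commutator $\tilde x \tilde y \tilde x^{-1} \tilde y^{-1}$ lies in $k^*$, does not depend on the choice of lifts, and thus defines a map $\psi \colon G \times G \to k^*$. A short computation using that commutators are central shows that $\psi$ is bimultiplicative and alternating; since each $g \in G$ satisfies $\tilde g^{|g|} \in k^*$, bimultiplicativity forces $\psi(g,h)^{|g|} = \psi(g^{|g|},h) = 1$, so $\psi$ takes values in the roots of unity of $k^*$.

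Let $R \subseteq G$ be the radical of $\psi$. By the definition of $R$, every element of $f_A^{-1}(R)$ commutes with $\widetilde G$, so $f_A^{-1}(R)$ is pairwise commuting; the commutative finite-dimensional $k$-subalgebra of $A$ that it generates is a subfield $L \subseteq A$, as commutative subalgebras of division algebras are fields. Since $L$ lies in a central division algebra of degree $n$, we have $[L:k] \mid n$, and the hypothesis $N_G = \{1\}$ ensures that no non-trivial element of $R \subseteq L^*/k^*$ admits a lift of finite order in $L^*$. Proposition~\ref{GaloisAdd} therefore yields $|R| \mid [L:k] \mid n$.

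Next, $\psi$ descends to a non-degenerate alternating bimultiplicative pairing $\bar\psi$ on $\bar G := G/R$. Invoking the standard structure theorem for such pairings on finite abelian groups (which, upon passing to $p$-primary parts, reduces to the existence of a symplectic basis on each piece), one extracts a maximal isotropic subgroup $\bar I \subseteq \bar G$ with $|\bar I|^2 = |\bar G|$. Lifting $\bar I$ to the subgroup $I \subseteq G$ containing $R$, we have $|I| = |R|\cdot|\bar I|$ and $I$ is $\psi$-isotropic; consequently $f_A^{-1}(I)$ is pairwise commuting and generates a subfield $L' \subseteq A$ with $[L':k] \mid n$. A second application of Proposition~\ref{GaloisAdd} delivers $|I| \mid [L':k] \mid n$.

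Putting the bounds together, $|G| = |R|\cdot|\bar I|^2 = |I|\cdot|\bar I|$, and since both $|I|$ and $|\bar I| = |I|/|R|$ divide $n$, their product divides $n^2$, giving $|G| \mid n^2$. The main technical obstacle I expect in writing this up is the existence of the Lagrangian $\bar I$ of cardinality $\sqrt{|\bar G|}$: this is the classical structure theorem for non-degenerate alternating bimultiplicative pairings on finite abelian groups with values in a group of roots of unity, but it needs to be invoked (or proved by reduction to hyperbolic planes on each $p$-primary component). The rest of the plan is a direct assembly of Proposition~\ref{GaloisAdd} applied first at the radical $R$ and then at a Lagrangian of $G/R$.
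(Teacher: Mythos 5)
Your proposal is correct and follows essentially the same route as the paper: the commutator pairing (Construction~\ref{form}), an isotropic subgroup $\Gamma$ with $|G|$ dividing $|\Gamma|^2$ whose preimage generates a subfield, and then Proposition~\ref{GaloisAdd} combined with the fact that subfield degrees divide $n$. The one step you flag as the main technical obstacle --- producing a Lagrangian of the non-degenerate quotient --- is exactly what the paper's Lemma~\ref{Gamma} supplies, via a self-contained induction on $|\mathcal{A}|$ (splitting off the cyclic direct summand generated by an element of maximal order and passing to the kernel of $\beta_x$) that handles possibly degenerate pairings directly, so no appeal to the general symplectic structure theorem for finite abelian groups is needed.
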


A very similar statement is proved by Shramov and Vologodsky in \cite[Cor. 4.4]{CostyaVologodskiy}. We provide their proof, modified slightly, in order to make it fit our context and to remove unnecessary restrictions on the base field.

Let us begin with a construction.

\begin{construction} \label{form}
Let $A$ be a central division algebra of degree $n$ over a field $k$ and let $\mathcal{A}$ be a finite abelian subgroup of $A^*/k^*$. Define a bilinear form $\beta \colon \mathcal{A} \times \mathcal{A} \to k$ as follows: for any two elements $x$, $y \in \mathcal{A}$ take $$\beta(x, y) = \widetilde{x}\widetilde{y}\widetilde{x}^{-1}\widetilde{y}^{-1}$$ for any $\widetilde{x} \in f_A^{-1}(x)$ and $\widetilde{y} \in f_A^{-1}(y)$. Note that, as the kernel of $f_A$ is the center of $A^*$, the form~$\beta$ does not depend on the choice of preimages. Moreover, as $\mathcal{A}$ is abelian, $$f_A(\widetilde{x}\widetilde{y}\widetilde{x}^{-1}\widetilde{y}^{-1}) = xyx^{-1}y^{-1} = 1,$$ thus the image of $\beta$ is a subset of $\ker f_A = k^*$. 
\end{construction}

\begin{remark}
To check that the form $\beta$ defined in Construction \ref{form} is bilinear, consider $x, y, z \in \mathcal{A}$ and take any of their preimages $\widetilde{x}, \widetilde{y}, \widetilde{z} \in A^*$. Since $\widetilde{x}\widetilde{z} = \widetilde{z}\widetilde{x}\beta(x, z)$ and $\widetilde{y}\widetilde{z} = \widetilde{z}\widetilde{y}\beta(y, z)$, we conclude by $$\widetilde{z}\widetilde{x}\widetilde{y}\beta(xy, z) = \widetilde{x}\widetilde{y}\widetilde{z} = \widetilde{x}\widetilde{z}\widetilde{y}\beta(y, z) = \widetilde{z}\widetilde{x}\widetilde{y}\beta(y, z)\beta(x, z).$$
\end{remark}

The proof of the following lemma is similar to \cite[Lem. 4.3]{CostyaVologodskiy}.

\begin{lemma} \label{Gamma}
Let $A$ be a central division algebra over a field $k$ and let $\mathcal{A}$ be a finite abelian subgroup of $A^*/k^*$. Denote by $\beta \colon \mathcal{A} \times \mathcal{A} \to k$ the form defined in Construction \ref{form}. Then there exists a subgroup $\Gamma \subset \mathcal{A}$ such that
\begin{enumerate}
\item[\rm(i)] $\beta(x, y) = 1$ for every $x, y \in \Gamma$; \label{form1}
\item[\rm(ii)] $|\Gamma|^2$ is divisible by $|\mathcal{A}|$; 
\item[\rm(iii)] the subalgebra generated by $f_A^{-1}(\Gamma)$ is a subfield of $A$. \label{form2}\label{form3}
\end{enumerate}
\end{lemma}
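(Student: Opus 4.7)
The plan is to construct $\Gamma$ as a maximal isotropic subgroup for the form $\beta$. First I would record that $\beta$ is alternating in the multiplicative sense, since $\beta(x,x) = \widetilde{x}\widetilde{x}\widetilde{x}^{-1}\widetilde{x}^{-1} = 1$; combined with bimultiplicativity this yields $\beta(y,x) = \beta(x,y)^{-1}$ for all $x,y \in \mathcal{A}$. Then I would define $\Gamma$ to be a subgroup of $\mathcal{A}$ maximal with respect to the property $\beta(\Gamma,\Gamma) = \{1\}$; this set is non-empty (the trivial subgroup qualifies), and the subgroup generated by any isotropic subset is again isotropic by bimultiplicativity, so such a maximal subgroup genuinely exists. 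Condition (i) holds by construction.

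For (ii), the main step is to show $\Gamma = \Gamma^\perp$, where $\Gamma^\perp = \{y \in \mathcal{A} : \beta(y,x) = 1 \text{ for every } x \in \Gamma\}$. Indeed, if some $y \in \Gamma^\perp$ lay outside $\Gamma$, then using $\beta(y,y) = 1$, the alternation $\beta(x,y) = \beta(y,x)^{-1} = 1$ for $x \in \Gamma$, and bimultiplicativity, the form would vanish on $\langle \Gamma, y \rangle \times \langle \Gamma, y \rangle$, contradicting the maximality of $\Gamma$. Consequently the map $\mathcal{A}/\Gamma \to \mathrm{Hom}(\Gamma, k^*)$ sending $y \mapsto \beta(y, -)|_\Gamma$ is injective. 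Decomposing the finite abelian group $\Gamma$ as a product of cyclic factors $\Z/n_i\Z$ and using $\mathrm{Hom}(\Z/n_i\Z, k^*) \cong \mu_{n_i}(k)$ (whose order divides $n_i$), one sees that $|\mathrm{Hom}(\Gamma, k^*)|$ divides $|\Gamma|$. Hence $|\mathcal{A}/\Gamma|$ divides $|\Gamma|$, which gives $|\mathcal{A}|$ divides $|\Gamma|^2$ as required.

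For (iii), the vanishing of $\beta$ on $\Gamma \times \Gamma$ means that any two lifts $\widetilde{x}, \widetilde{y} \in f_A^{-1}(\Gamma)$ commute in $A^*$. Consequently the $k$-subalgebra $R \subset A$ generated by $f_A^{-1}(\Gamma)$ is commutative. Since $A$ has dimension $n^2$ over $k$, $R$ is finite-dimensional over $k$; being a subring of the division algebra $A$, it is an integral domain; and a finite-dimensional commutative $k$-algebra that is an integral domain is automatically a field (multiplication by any nonzero element is injective, hence surjective by dimension count).

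The main technical obstacle is the verification that $\Gamma^\perp = \Gamma$ and the extraction of the divisibility statement from it; the rest is essentially formal, given the structural information about $\beta$ already set up in Construction \ref{form} and the preceding remark.
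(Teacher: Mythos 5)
Your proof is correct, but it follows a genuinely different route from the paper's. The paper argues by induction on $|\mathcal{A}|$: it picks an element $x$ of maximal order $m$, uses the character $\beta_x=\beta(x,-)$ to get $\mathcal{A}/\mathcal{A}_x\hookrightarrow \mu_m(k)\subset\Z/m\Z$, splits $\mathcal{A}_x=\langle x\rangle\times\mathcal{A}'$, and builds $\Gamma$ one cyclic factor at a time, verifying (iii) along the way by adjoining each new generator to the previously constructed subfield as $K[t]/P(t)$ with $P$ irreducible. You instead take $\Gamma$ to be a maximal isotropic subgroup in one stroke, prove $\Gamma^\perp=\Gamma$, and deduce (ii) from the injection $\mathcal{A}/\Gamma\hookrightarrow\mathrm{Hom}(\Gamma,k^*)$ together with the fact that $|\mathrm{Hom}(\Gamma,k^*)|$ divides $|\Gamma|$ (because $\mu_{n}(k)$ is cyclic of order dividing $n$); your treatment of (iii) — a finite-dimensional commutative subring of a division algebra is a domain, hence a field — is cleaner and avoids the inductive bookkeeping. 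Both arguments ultimately rest on the same input, namely that the pairing is alternating and bimultiplicative with values in roots of unity of $k$, and that finite subgroups of $k^*$ are cyclic; your version isolates this as a standard statement about alternating pairings on finite abelian groups, while the paper's version has the minor advantage of exhibiting $\Gamma$ explicitly as a direct product of cyclic groups generated by elements of large order. All steps in your argument check out, including the existence of a maximal isotropic subgroup (finiteness of $\mathcal{A}$) and the verification that $\langle\Gamma,y\rangle$ is isotropic for $y\in\Gamma^\perp$.
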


\begin{proof}
We proceed by induction on the order of $\mathcal{A}$. Consider an element $x \in \mathcal{A}$ of the largest order~$m$ and denote by $\beta_x \colon \mathcal{A} \to k^*$ the group morphism that takes $y \in \mathcal{A}$ to $\beta(x, y)$. Denote by~$\mathcal{A}_x$ its kernel and note that, since $\beta^m(x, y) = \beta(x^m, y) = 1$, the image of $\beta_x$ consists of elements of order dividing $m$ in~$k^*$, hence the inclusion $\mathcal{A}/\mathcal{A}_x \hookrightarrow \Z/m\Z$. Moreover, as $\beta(x, x) = 1$, there is an inclusion $\langle x \rangle \hookrightarrow \mathcal{A}_x$. 

Note that in a finite abelian group the subgroup generated by an element of maximal order is always a direct summand, so we can present the group $\mathcal{A}_x$ in the form $\mathcal{A}_x = \langle x \rangle \times \mathcal{A}'$. By induction we can assume that the group $\mathcal{A}'$ contains a subgroup $\Gamma'$ satisfying the conditions above. 

We denote by $\Gamma$ the product $\Gamma' \times \langle x \rangle$ and prove the assertion of the lemma. One can easily check that $\beta(y, z) = 1$ for every $y$, $z \in \Gamma$. Let us show that $|\mathcal{A}|$ divides $|\Gamma|^2$. Indeed, using the inclusion $\mathcal{A}/\mathcal{A}_x \hookrightarrow \Z/m\Z$ one deduces that $|\mathcal{A}|$ divides $m|\mathcal{A}_x| = m^2|\mathcal{A}'|$. By induction, $|\mathcal{A}'|$ divides $|\Gamma'|^2$, thus $|\mathcal{A}|$ divides $m^2|\Gamma'|^2 = |\Gamma|^2$. 

Finally, by induction, the subalgebra generated by $f_A^{-1}(\Gamma')$ is a field. Denote it by $K$. Note that as

 $$\widetilde{x}\widetilde{y}\widetilde{x}^{-1}\widetilde{y}^{-1} = \beta(f_A(\widetilde{x}), f_A(\widetilde{y})) = 1$$ 
 for any preimage $\widetilde{x}$ of $x$ and every $\widetilde{y} \in f_A^{-1}(\Gamma')$, every element in the preimage of $\Gamma'$ commutes with $\widetilde{x}$. 
 Therefore, the subalgebra generated by $f_A^{-1}(\Gamma)$ is of the form $K[t]/P(t)$, where $P(t)$ is the minimal polynomial of $\widetilde{x}$ over $K$. As $A$ is a division algebra, $P(t)$ is irreducible over $K$, therefore the subalgebra generated by $f_A^{-1}(\Gamma)$ is a field.
\end{proof}

Now we are ready to prove Proposition \ref{CV}.

\begin{proof}[Proof of Proposition \ref{CV}]
Note that $G$ is abelian by Corollary \ref{N_G}. Let $\Gamma \subset G$ be the group constructed in Lemma \ref{Gamma} and $L \supset k$ be the subfield of $A$ generated by $f_A^{-1}(\Gamma)$. Since $N_G$ is trivial, no element of $G$ can be lifted to an element of $A^*$ of finite order, so we can apply Proposition~\ref{GaloisAdd} to $\Gamma$ and $L \supset k$, thus obtaining that $|\Gamma|$ divides $[L : k]$. Recall that the degree of every subfield of a central simple algebra $A$ divides the degree of $A$, hence $|\Gamma|$ divides $n$. Lemma~\ref{Gamma}\,(ii) yields that $|\Gamma|^2$ is divisible by $|G|$, and therefore $n^2$ is divisible by $|G|$. 
\end{proof}

\section{General case} \label{mainsect}
In this section we prove the generalization of Proposition \ref{CV}, thus completing the proof of Theorem~\ref{main}.

\begin{theorem} \label{n^2}
Let $A$ be a central division algebra of degree $n$ over a field $k$ of characteristic coprime with $n$. Let $G$ be a finite subgroup of $A^*/k^*$. Consider the subgroup $N_G$ of $G$ constructed in Corollary \ref{N_G}. Then $|G/N_G|$ divides~$n^2$.
\end{theorem}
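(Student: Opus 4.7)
The plan is to reduce to the case $N_G = 1$ (handled by Proposition~\ref{CV}) via a descent to the centralizer of the subfield provided by Lemma~\ref{subfield}. First I reduce to $G$ being a $p$-group: since $G/N_G$ is abelian by Corollary~\ref{N_G}, its order factors as the product of the orders of its Sylow $p$-subgroups, and for any Sylow $p$-subgroup $G_p \subset G$ one has $N_G \cap G_p = N_{G_p}$ (both equal the elements of $G_p$ admitting a finite-order lift in $A^*$), so the Sylow $p$-subgroup of $G/N_G$ identifies with $G_p/N_{G_p}$. If $N_G = 1$, Proposition~\ref{CV} concludes, so assume $N_G$ is non-trivial.

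Apply Lemma~\ref{subfield} to obtain a $G$-invariant subfield $k \subsetneq K \subset A$ generated by preimages of a normal subgroup $N'_G \subset N_G$ with $[N_G : N'_G] \in \{1, 2\}$. Write $d := [K:k] > 1$ and $C := Z_A(K)$, which by the double centralizer theorem is a central division algebra over $K$ of degree $m := n/d < n$ (with $\mathrm{char}\,k$ coprime to $m$). Let $T \subset G$ be the kernel of the conjugation action $G \to \mathrm{Aut}_k(K)$; then $|G/T|$ divides $d$ and $T \supset N'_G$. Any lift of $t \in T$ centralizes $K$ and hence lies in $C^*$, giving a homomorphism $\phi : T \to C^*/K^*$ with kernel $T \cap f_A(K^*)$. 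I apply Proposition~\ref{GaloisMain} to the subgroup $\mathcal{A} := G \cap f_A(K^*) \subset K^*/k^*$: its subgroup $\mathcal{B}$ of elements with a finite-order lift in $K$ equals $N'_G$ (by Lemma~\ref{subfield} all lifts of $N'_G$ lie in $K$, so in particular the finite one), and the preimages of $\mathcal{B}$ already generate $K$; hence Proposition~\ref{GaloisMain} forces $\mathcal{A} = \mathcal{B} = N'_G$. Consequently $\ker\phi = N'_G$ and $\phi$ induces an embedding $T/N'_G \hookrightarrow C^*/K^*$.

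I would then argue by induction on $n$: the inductive hypothesis applied to $T/N'_G \subset C^*/K^*$ bounds $|(T/N'_G)/N_{T/N'_G}|$ by $m^2$. To close the induction, I iterate the construction inside $C$, reapplying Lemma~\ref{subfield} to $T/N'_G$ whenever its $N$-subgroup is non-trivial. This produces a strictly increasing tower of subfields $k \subsetneq K \subsetneq K_2 \subsetneq \cdots \subsetneq K_I$ whose associated centralizer algebras have strictly decreasing degrees $n = m_0 > m_1 > \cdots > m_I$; the tower terminates when the residual $N$-subgroup becomes trivial, at which stage Proposition~\ref{CV} provides the base bound. Combining the bounds telescopically — using at each level $|G_i/T_{i+1}| \mid d_{i+1} := [K_{i+1}:K_i]$, the identification of the new kernel via Proposition~\ref{GaloisMain}, and multiplicativity $\prod d_{i+1} \cdot m_I = n$ — yields that $|G/N'_G|$ divides $d \cdot m^2$, hence divides $n^2 = d^2 m^2$; and $|G/N_G| = |G/N'_G|/[N_G:N'_G]$ likewise divides $n^2$. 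The hard part will be organizing the nested iteration so that the telescoping collapses cleanly: one must verify that at each level the extra factor of $|N_{G_i}|$ coming from the deeper residual $N$-subgroup is absorbed by the corresponding field-degree growth $d_{i+1}$, a compatibility that the construction of $K$ in Lemma~\ref{subfield} together with Proposition~\ref{GaloisMain} should guarantee but requires careful accounting in both the cyclic and the dihedral (rank-one index-two) subcases of the $N_G$ structure from Proposition~\ref{p-groups}.
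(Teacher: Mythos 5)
Your opening moves are sound and partly more transparent than the paper's: the Sylow reduction is exactly the paper's, and the observation that Lemma~\ref{subfield} plus Proposition~\ref{GaloisMain} (applied with $L=K$) forces $G\cap f_A(K^*)=N_G'$, giving an embedding $T/N_G'\hookrightarrow C^*/K^*$ into the centralizer, is correct. But the proof does not close, and the place where you defer to ``careful accounting'' is precisely where it fails. The inductive hypothesis applied to $T/N_G'\subset C^*/K^*$ only bounds $\bigl|(T/N_G')/N_{T/N_G'}\bigr|$ by $m^2$, and there is no reason for the residual group $N_{T/N_G'}$ to be trivial or to sit inside (the image of) $N_G$: an element $t\in T$ has $\phi(t)$ admitting a finite-order lift $\widetilde{t}\in C^*$ exactly when some lift of $t$ equals $\widetilde{t}\kappa^{-1}$ with $\kappa\in K^*$, and $t\in N_G$ only if $\kappa$ can be taken in $\mu(K)\cdot k^*$ --- which is not automatic. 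Your proposed fix, iterating the tower $K\subsetneq K_2\subsetneq\cdots$ and telescoping, accumulates the orders $|N'_i|$ of these residual groups as multiplicative error terms, and these are \emph{not} controlled by the field degrees $d_{i}=[K_{i}:K_{i-1}]$: a finite cyclic subgroup of $K_i^*/K_{i-1}^*$ generated by a root of unity can have order strictly larger than $[K_i:K_{i-1}]$ (e.g.\ the image of $\mu_{105}$ in $\mathbb{Q}(\zeta_{105})^*/\mathbb{Q}^*$ has order $105$ while the extension has degree $48$). So the telescoped bound is $|G|\mid n\, m_I\prod_i|N'_i|$, which does not reduce to $n^2|N_G|$ without an argument you have not supplied.

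The paper avoids this induction entirely. Instead of the full centralizer $C=Z_A(K)$, it takes $B$ to be the subalgebra generated by $f_A^{-1}(\ker\varphi)$ and works with its center $L$, which may properly contain $K$. It then splits $\ker\varphi/N_G'$ into two pieces: the part landing in $f_A(L^*)$, which Proposition~\ref{GaloisMain} applied to $L\supset k$ (with $K$ the field generated by the lifts of $N_G'$) bounds by $[L:K]=q/d$; and the image $H=\psi(\ker\varphi)\subset B^*/L^*$, for which it argues that the residual group $N_H$ vanishes, so that Proposition~\ref{CV} applies directly and bounds $|H|$ by $n^2/q$. The product $(q/d)(n^2/q)\cdot d=n^2$ then comes out in one pass, with no recursion. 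If you want to salvage your route, the essential missing lemma is a triviality (or divisibility) statement for the residual $N$-group after descent --- exactly the statement the paper isolates as $N_H=\{1\}$ --- and your choice of the full centralizer over the smaller field $K$ makes that statement strictly harder than the paper's version over $L$.
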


Before proving the theorem we consider the case when $G$ is a $p$-group.

\begin{theorem} \label{p-n^2}
Let $A$ be a central division algebra of degree $n$ over a field $k$ of characteristic coprime with $n$ and let $G \subset A^*/k^*$ be a finite $p$-group for a prime $p$. Then $|G/N_G|$ divides $n^2$.
\end{theorem}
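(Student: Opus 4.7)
The plan is to split the required bound $|G/N_G| \mid n^2$ into a ``field-theoretic'' part and an ``algebra'' part by passing to the $G$-invariant subfield $K \subset A$ produced by Lemma~\ref{subfield}, and to its centralizer $B = C_A(K)$. Writing $d = [K:k]$ and $m = n/d$, the double centralizer theorem makes $B$ a central division algebra over $K$ of degree $m$, so the target factorization $n^2 = d^2 m^2$ matches the decomposition.

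For the algebraic piece, I would set $H \lhd G$ to be the kernel of the induced conjugation action $G \to \Aut(K/k)$. Separability of $K/k$, guaranteed by the hypothesis that $\mathrm{char}(k)$ is coprime with $n$ together with $d \mid n$, gives that $|G/H|$ divides $d$. Preimages of elements of $H$ commute with all of $K$ and hence lie in $B^*$, so $H \subset B^*/k^*$; let $\bar H$ be its image in $B^*/K^*$. Applying Corollary~\ref{N_G} to $\bar H$ inside $B^*/K^*$ defines a subgroup $N_{\bar H}$, and I would check that $N_{\bar H}$ is trivial: a finite-order lift of $\bar h \in N_{\bar H}$ sits in $B^* \subset A^*$, so the corresponding element of $H$ lies in $N_G \cap H$, which by Lemma~\ref{subfield} has all of its preimages in $K^*$, forcing $\bar h$ to be trivial in $B^*/K^*$. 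Proposition~\ref{CV}, applied to $\bar H$ inside the degree-$m$ central division $K$-algebra $B$, then yields $|\bar H| \mid m^2$.

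For the field-theoretic piece, I need to identify the kernel $H \cap (K^*/k^*)$ of the map $H \to \bar H$. I would apply Proposition~\ref{GaloisMain} to the extension $K/k$ with $\mathcal A = H \cap (K^*/k^*)$. A direct check shows that its subgroup $\mathcal B$ of elements admitting finite-order lifts in $K^*$ coincides with $N_G \cap H$: inside $K^* \subset A^*$, finite order in $K^*$ is the same as finite order in $A^*$, so such elements land in $N_G$, and conversely Lemma~\ref{subfield} provides the required $K^*$-lifts for $N_G \cap H$. By the second part of the same lemma, the subfield of $K$ generated by preimages of $N_G \cap H$ is all of $K$, so Proposition~\ref{GaloisMain} forces $|\mathcal A / \mathcal B|$ to divide $[K:K] = 1$. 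Hence $H \cap (K^*/k^*) = N_G \cap H$, and $|\bar H| = |H/(H \cap N_G)|$.

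Finally, I would assemble the bound through $|G/N_G| = |G/H| \cdot |\bar H| / |N_G/(N_G \cap H)|$. If $N_G \subset H$---automatic when $p$ is odd, and also in the cyclic case when $p = 2$, by Proposition~\ref{p-groups}---the denominator is $1$, and $|G/N_G|$ divides $d \cdot m^2 = n^2/d$, hence $n^2$. The main obstacle is the dihedral case $p = 2$, $N_G \cong D_{2^m}$: then $N_G \cap H$ is only the index-$2$ cyclic subgroup $N_G'$, while a ``flip'' element of $N_G$ lies outside $H$, forcing $|G/H| \geqslant 2$ and hence even, since $G$ is a $2$-group. The formula then becomes $|G/N_G| = (|G/H|/2) \cdot |H/N_G'|$, which divides $(d/2) \cdot m^2$; since $|G/H| \mid d$ and $|G/H|$ is even we have $d$ even, and then $(d/2) \cdot m^2$ divides $d^2 m^2 = n^2$ because the quotient is the integer $2d$. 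Producing this factor of $2$ from the flip and tracking that it is absorbed back into the bound is the subtlest part of the argument.
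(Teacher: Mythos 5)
Your argument is essentially the paper's: both start from the $G$-invariant subfield $K$ supplied by Lemma~\ref{subfield}, split $G$ along the conjugation map $\varphi\colon G\to\Aut(K/k)$ (whose image has order dividing $d=[K:k]$), and handle $\ker\varphi$ by combining Proposition~\ref{GaloisMain} for the part lying in a commutative subfield with Proposition~\ref{CV} for the residual group modulo the centre of a suitable subalgebra. There are two minor but genuine differences. First, you take $B=C_A(K)$, whose centre is exactly $K$, so your application of Proposition~\ref{GaloisMain} to $K/k$ collapses to the statement $\mathcal A=\mathcal B$; the paper instead takes $B$ to be the subalgebra generated by $f_A^{-1}(\ker\varphi)$, whose centre $L$ may be strictly larger than $K$, and the factor $[L:K]$ then enters the count explicitly. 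Your choice is arguably cleaner. Second, the paper avoids your dihedral case analysis entirely by bounding $|G/N_G'|$ with $N_G'=N_G\cap\ker\varphi$ and only at the very end invoking $N_G'\subset N_G$; in your notation, $|G/(N_G\cap H)|=|G/H|\cdot|\bar H|$ divides $d\cdot m^2=n^2/d$ in all cases, and $|G/N_G|$ divides $|G/(N_G\cap H)|$, so the explicit tracking of the factor $2$ coming from the flip, while correct, is unnecessary. Finally, one step deserves more care in your write-up (the paper is equally terse here): a finite-order lift of $\bar h\in N_{\bar H}$ lies in $B^*$ but is a lift only modulo $K^*$, so the element of $G$ it determines is recovered only up to a factor in $K^*/k^*$ which need not itself admit a finite-order lift; the phrase \textquotedblleft the corresponding element of $H$ lies in $N_G\cap H$\textquotedblright\ therefore needs a sentence of justification before Proposition~\ref{CV} can be applied to $\bar H$.
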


\begin{proof}
Consider the field $K \subset A$ given by Lemma \ref{subfield}. Denote by $\varphi \colon G \to \Aut(K/k)$ the map corresponding to the action of $G$ on $K$ by conjugations. The degree $d$ of $K$ over $k$ is divisible by~$|\Aut(K/k)|$ and thus is divisible by $|\varphi(G)|$.

We now want to study the kernel $\ker \varphi$. Let $B$ be a subalgebra of $A$ generated by $f_A^{-1}(\ker \varphi)$ and let $L$ be the center of $B$. Note that every element in $f_A^{-1}(\ker \varphi)$ commutes with $K$ by the definition of $\varphi$, so $K$ is a subfield of $L$. Denote by $q$ the degree of the extension $L \supset k$. Let~$\psi \colon B^*/k^* \to B^*/L^*$ be the canonical projection. Then the canonical projection \mbox{$f_B \colon B^* \to B^*/L^*$} can be seen as the composition~$\psi \circ f_A|_B$.

For convenience consider the following commutative diagram of groups. \\[-0.6cm]

$$\xymatrix@C-15pt{
f_A^{-1}(\ker\varphi)\ar@/_1.3pc/_(0.28){f_B}[dd] \ar[r] \ar[d]^{f_A} & B^* \ar@/_1.3pc/_(0.28){f_B}[dd] \ar[r] \ar[d]^{f_A} & A^* \ar[d]^{f_A} \\
\ker \varphi \ar[r] \ar[d]^{\psi} & B^*/k^* \ar[r] \ar[d]^{\psi} & A^*/k^* \\
H \ar[r] & B^*/L^* \\
}$$

Let $N_G' \subset N_G$ be the maximal subgroup of $N_G$ such that $f_A^{-1}(N_G') \subset K$, which is, by Lemma~\ref{subfield}, the group $\ker \varphi \cap N_G$. Consider the group $$\mathcal{A} = G \cap f_A(L^*).$$ Note that $\mathcal{A} \subset f_A(L^*) = L^*/k^*$ so we can apply Proposition \ref{GaloisMain} to $\mathcal{A}$ and the extension $L \supset k$. Recall that by definition of $N_G$ an element $x \in \mathcal{A}$ has a preimage $\widetilde{x} \in f_A^{-1}(x)$ of finite order if and only if $x \in N_G$, so the group $\mathcal{B}$ from Proposition \ref{GaloisMain} is exactly $$\mathcal{A} \cap N_G = f_A(L^*) \cap N_G= N_G'$$ and the field generated by its preimage is $K$. Hence, Proposition~\ref{GaloisMain} yields that $$|\ker (\psi|_{\ker\varphi})/N_G'| = |(\ker \varphi \cap f_A(L^*))/N_G'|$$ divides $[L : K] = \frac{q}{d}.$ \\[-0.3cm]

Consider the central division algebra $B$ over $L$ and denote by $H$ the finite group $$\psi(\ker\varphi) \subset B^*/L^*.$$ In this setting consider the group $N_H \lhd H$ constructed in Section~\ref{N_Gsect}. Note that an element $x \in H$ has a preimage $\widetilde{x}\in B$ of finite order if and only if it has a preimage $y \in  \psi^{-1}(x)$ that lies in $N_G$. Therefore, $$N_H = \psi(\ker\varphi \cap N_G) = \psi(N_G') = \{1\},$$ as $N_G' \subset f_A(L^*)$. Proposition~\ref{CV} applied to $H$ thus yields that $|H|$ divides $$\deg_L^2B = \dim_LB = \frac{\dim_kB}{\dim_kL}.$$ Recall that the dimension of a simple subalgebra $B$ of a central simple algebra $A$ divides the dimension of $A$, and therefore $|H|$ divides $$\frac{\dim_kA}{\dim_kL} = \frac{\dim_kA}{q} = \frac{n^2}{q}.$$

To summarize,
$$|\ker\varphi/N_G'| = |\ker (\psi|_{\ker\varphi})/N_G'| \cdot |\im(\psi|_{\ker\varphi})|$$ divides $$\frac{q}{d} \cdot \frac{n^2}{q} = \frac{n^2}{d}.$$
Moreover, $|\im\varphi|$ divides $d$, so $$|G/N_G'| = |\im \varphi|\cdot|\ker\varphi/N_G'|$$ divides~$n^2$. Finally, since $N_G'$ is a subgroup of $N_G$, we obtain that $|G/N_G|$ divides $|G/N_G'|$, hence it divides $n^2$.
\end{proof}

Now we are ready to prove Theorem \ref{n^2} for an arbitrary group $G$.

\begin{proof}[Proof of Theorem \ref{n^2}]
For a group $H$ and a prime $p$ denote by $|H|_p$ the number $p^\alpha$ such that $|H|$ is divisible by $p^\alpha$ and is not divisible by $p^{\alpha + 1}$.

For a prime $p$ consider a Sylow $p$-subgroup $G_p \subset G$. Recall that $N_G$ is the set of all elements $x \in G$ such that there exists $\widetilde{x}\in f_A^{-1}(x)$ of finite order, so $N_{G_p} = N_G \cap G_p$. 

Therefore, as the image of a Sylow $p$-subgroup under a surjection is a Sylow $p$-subgroup,
we conclude that $$|G/N_G|_p = |G_p/(G_p \cap N_G)| = |G_p/N_{G_p}|,$$ hence Theorem \ref{p-n^2} yields that $|G/N_G|_p$ divides $n^2$ for all prime $p$, and thus $|G/N_G|$ divides $n^2$.
\end{proof}

Theorem~\ref{main} directly follows from Theorem~\ref{n^2} combined with the results of Section~\ref{N_Gsect}.

\section{Division algebras of prime degree}

In this section we apply Theorem~\ref{main} to prove a generalization of the result of Shramov (see~\cite{surfaces}), describing all finite groups acting on non-trivial (i.e.\;those not isomorphic to matrix algebras over the base field) central simple algebras of prime degree. Note that by Wedderburn theorem every non-trivial central simple algebra of prime degree is a division algebra, so we can apply our previous results.

We start by reminding the reader a theorem from a general theory of division algebras.

\begin{theorem}[{see~\cite[§VIII.10.3]{bou}}]\label{bou}
Let $A$ be a division algebra of degree $n$ over a base field~$k$. Let $K \subset A$ be a maximal field, containing $k$, in $A$. Then the degree $[K \colon k]$ is equal to $n$.
\end{theorem}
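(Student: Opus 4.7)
The plan is to combine two classical facts about central simple algebras: the characterization of a maximal subfield of a division algebra as equal to its own centralizer, and the double centralizer theorem. Together these pin down $[K : k]$ exactly.

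First I would argue that the centralizer $C_A(K) = \{a \in A : ax = xa \text{ for all } x \in K\}$ coincides with $K$ itself. If $a \in C_A(K)$, then the subalgebra $K[a] \subset A$ is commutative. Any commutative subalgebra of a division algebra that contains $k$ is in fact a field: its nonzero elements have multiplicative inverses in $A$, and algebraicity over $k$ (automatic since $\dim_k A < \infty$) forces these inverses to lie back in the subalgebra, as the inverse of an algebraic element is expressible as a polynomial in that element. Hence $K[a]$ is a subfield of $A$ containing $K$, and by maximality of $K$ we must have $a \in K$. Therefore $C_A(K) = K$.

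Next I would apply the double centralizer theorem: for every simple $k$-subalgebra $B$ of a central simple $k$-algebra $A$, one has $\dim_k B \cdot \dim_k C_A(B) = \dim_k A$. Specializing to $B = K$ (which is simple as a field) and using $C_A(K) = K$ from the previous step yields $[K : k]^2 = \dim_k A = n^2$, whence $[K : k] = n$, as claimed.

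The main obstacle in making this self-contained would be the double centralizer theorem itself, which is nontrivial; it is typically deduced from the natural identification $A \otimes_k A^{\mathrm{op}} \cong \mathrm{End}_k(A)$ by tracking how centralizers correspond under this isomorphism. Since this theorem is a cornerstone of the theory of central simple algebras (and the author attributes the whole statement to Bourbaki), invoking it is entirely appropriate here. The one step genuinely depending on the division algebra hypothesis is the observation that commutative subalgebras of $A$ containing $k$ are subfields; this is precisely what upgrades maximality among subfields to the stronger self-centralization property, after which the double centralizer theorem does the rest.
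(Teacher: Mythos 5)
The paper does not prove this statement at all---it is quoted as a known result from Bourbaki---and your argument is precisely the standard proof found there and in \cite[Ch.~4]{GS}: a maximal subfield of a division algebra is its own centralizer (since any element commuting with $K$ generates with it a commutative, hence field, subalgebra), and the double centralizer theorem then gives $[K:k]^2=\dim_k A=n^2$. Your proof is correct; the only point worth flagging is that the double centralizer theorem in the form $\dim_k B\cdot\dim_k C_A(B)=\dim_k A$ requires $A$ to be \emph{central} over $k$, which the statement leaves implicit but which holds in every application made in the paper.
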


Let us begin by considering $p$-subgroups of finite subgroups of automorphisms of central division algebras of prime degree.

\begin{prop}\label{c1}
Let $A$ be a non-trivial central simple algebra of prime degree $p \geqslant 3$ over a field~$k$. For a prime $q$, let $N \subset A^*/k^*$ be a finite $q$-subgroup that admits a finite lift $\widetilde{N} \subset A^*$. Then $N$ is cyclic.
\end{prop}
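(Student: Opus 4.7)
The plan is to cascade through the dichotomies provided by Propositions~\ref{p-groupsAbove} and~\ref{p-groups}, reducing to a single stubborn case---a non-cyclic dihedral $N$ lifted from a quaternion $\widetilde N$---and then kill it with a subfield-degree argument.

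First I would note that by Wedderburn's theorem $A$ is a division algebra, so the earlier results apply. The case $q \neq 2$ is then handled immediately by Proposition~\ref{p-groups}, and I would focus on $q = 2$. Proposition~\ref{p-groups} says $N$ is either cyclic or dihedral; I would assume the latter and aim for a contradiction. Following the proof of Proposition~\ref{p-groups}, I would pass to a Sylow $2$-subgroup of $\widetilde N$ to assume $\widetilde N$ itself is a finite $2$-group, and then use Proposition~\ref{p-groupsAbove} to conclude $\widetilde N \cong Q_{2^m}$ with $m \geqslant 2$ (cyclic $\widetilde N$ would give cyclic $N$). Writing $\widetilde x, \widetilde y \in \widetilde N$ for the standard generators, $\widetilde x$ has order $2^m \geqslant 4$ in $A^*$ and $\widetilde y \widetilde x \widetilde y^{-1} = \widetilde x^{-1}$.

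The crux is to prove $\widetilde x \in k^*$: centrality would then make the relation $\widetilde y \widetilde x \widetilde y^{-1} = \widetilde x^{-1}$ force $\widetilde x^2 = 1$, contradicting $\widetilde x$ having order $2^m \geqslant 4$. I would bound $[K:k]$ for the subfield $K := k(\widetilde x) \subset A$ in two ways. On one side, the double centralizer theorem applied to the commutative subalgebra $K \subset A$ gives $[K:k]^2 \mid [A:k] = p^2$, so $[K:k] \mid p$. On the other side, $\widetilde x$ is a primitive $2^m$-th root of unity (and this forces $\mathrm{char}(k) \ne 2$ incidentally, since otherwise $\widetilde x^{2^m} = 1$ would yield $(\widetilde x - 1)^{2^m} = 0$ and hence $\widetilde x = 1$ in the division algebra $A$); choosing an embedding $K \hookrightarrow \bar k$, the image $\zeta$ of $\widetilde x$ is a primitive $2^m$-th root of unity whose powers exhaust the whole group of $2^m$-th roots of unity, so $K \cong k(\zeta_{2^m})$. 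Hence $[K:k]$ divides $\varphi(2^m) = 2^{m-1}$.

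Since $p$ is an odd prime, the only positive integer dividing both $p$ and $2^{m-1}$ is $1$, so $[K:k] = 1$ and $\widetilde x \in k^*$, completing the proof. The main obstacle I anticipate is the field-theoretic identification $K \cong k(\zeta_{2^m})$; it is elementary but rests on the observation that a primitive $2^m$-th root of unity already generates the full group of $2^m$-th roots of unity by its powers. Everything else should assemble routinely from the earlier results of the paper.
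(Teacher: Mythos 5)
Your proof is correct, and its first step (invoking Proposition~\ref{p-groups} to reduce to excluding the dihedral case) is exactly the paper's. The core mechanism is also shared --- a subfield of $2$-power degree cannot sit inside a division algebra of odd prime degree $p$ --- but the execution differs noticeably. The paper's argument is a one-liner at the level of $N$ itself: a dihedral $N$ contains an element of order $2$, any lift $x$ of which satisfies $x^2\in k$, $x\notin k$, so $k(x)$ is a quadratic subfield of $A$ and hence $2$ must divide $p$ by Theorem~\ref{bou}, which is absurd. (This in fact shows $A^*/k^*$ has no elements of order $2$ at all.) You instead climb back up to the lift, identify it as a generalized quaternion group $Q_{2^m}$, and work with its cyclic generator $\widetilde{x}$: the cyclotomic bound $[k(\widetilde{x}):k]\mid\varphi(2^m)=2^{m-1}$ together with $[k(\widetilde{x}):k]\mid p$ forces $\widetilde{x}\in k^*$, after which the relation $\widetilde{y}\widetilde{x}\widetilde{y}^{-1}=\widetilde{x}^{-1}$ gives the contradiction. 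Both are sound; yours costs an extra layer (the separate treatment of characteristic $2$, the Galois-theoretic identification $k(\widetilde{x})\cong k(\zeta_{2^m})$, and the quaternion relations), all of which the paper's choice of the order-$2$ element in $N$ makes unnecessary. One small remark: your parenthetical that the double centralizer theorem gives $[K:k]^2\mid[A:k]$ is stated a bit too quickly in general, but for prime degree it follows at once from $[K:k]\mid p^2$ and $[K:k]\leqslant p$, so nothing is lost here.
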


\begin{proof}
By Proposition~\ref{p-groups} it is enough to prove that $N$ cannot be isomorphic to $D_{2^m}$. 

If $N$ contains an element of order $2$, there exists an element $x \in A$ such that $x \not \in k$ and~$x^2 \in k$, and thus the field extension $k(x) \supset k$ has degree $2$. In this case Theorem \ref{bou} yields that $2$ should divide $p$, which leads to a contradiction.
\end{proof}

Let us now state the following result from group theory. For a proof see \cite[p.145, Thr. 11]{groups}.

\begin{theorem} \label{groups}
Let $N$ be a finite group that has no non-cyclic Sylow $p$-subgroups for any prime~$p$. Then for some coprime $m$, $n \in \mathbb{N}$ there exists a short exact sequence
$$1 \to \Z/n\Z \to N \to \Z/m\Z \to 1.$$
\end{theorem}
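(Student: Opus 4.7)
The plan is to establish the theorem in three steps: first show that $N$ is solvable, then identify its Fitting subgroup as a cyclic normal subgroup of some order $n$, and finally show that the quotient is cyclic of some order $m$ coprime to $n$. The argument will proceed by induction on $|N|$, with the base case trivial.

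For solvability, I would let $p$ be the smallest prime dividing $|N|$ and $P$ the corresponding (cyclic) Sylow $p$-subgroup. Since $P$ is abelian, $P \subseteq C_N(P)$, so $P$ is a Sylow $p$-subgroup of $C_N(P)$ as well, and hence $N_N(P)/C_N(P)$ has order coprime to $p$. On the other hand this quotient embeds into $\Aut(P) \cong (\Z/p^a\Z)^*$, whose order is $p^{a-1}(p-1)$. Combining, $|N_N(P)/C_N(P)|$ must divide $p-1$, while all its prime divisors must also divide $|N|$ and hence be at least $p$; the only possibility is $N_N(P) = C_N(P)$. Burnside's normal $p$-complement theorem then produces a normal subgroup $K \lhd N$ of order coprime to $p$ with $N = K \rtimes P$; since $K$ inherits the cyclic-Sylow hypothesis and has smaller order, it is solvable by induction, hence so is $N$.

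Next, let $F = F(N)$ be the Fitting subgroup. As $F$ is nilpotent, it is the direct product of its Sylow subgroups, each of which sits inside the corresponding cyclic Sylow subgroup of $N$ and so is cyclic. Therefore $F$ is a direct product of cyclic groups of distinct prime-power orders, that is, cyclic of some order $n$. Using that $N$ is solvable, the standard fact $C_N(F(N)) \subseteq F(N)$ combined with the abelianness of $F$ yields $C_N(F) = F$, so conjugation gives an embedding $N/F \hookrightarrow \Aut(F) \cong (\Z/n\Z)^*$, which is abelian of order $\phi(n)$, coprime to $n$.

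Finally, $N/F$ is an abelian group whose Sylow subgroups are cyclic (as quotients of cyclic Sylow subgroups of $N$), hence $N/F$ is cyclic of some order $m$ dividing $\phi(n)$; in particular $\gcd(m,n)=1$. This yields the desired short exact sequence $1 \to \Z/n\Z \to N \to \Z/m\Z \to 1$. The main obstacle is the solvability step, which rests on Burnside's normal $p$-complement theorem applied at the smallest prime dividing $|N|$; once solvability is in hand, the subsequent Fitting-subgroup analysis and the cyclicity of $N/F$ are routine.
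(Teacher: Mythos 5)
The paper does not prove this statement itself; it cites Zassenhaus, so the only question is whether your argument is correct. Your first two steps are fine: the Burnside normal $p$-complement argument at the smallest prime divisor does give solvability, and the Fitting subgroup $F=F(N)$ is indeed cyclic and self-centralizing, so $N/F$ embeds into $\Aut(F)$ and is cyclic. The gap is in the very last inference: from $m\mid\phi(n)$ you conclude $\gcd(m,n)=1$, but $\phi(n)$ need not be coprime to $n$ (e.g.\ $\phi(9)=6$). This is not a repairable slip of wording --- the intermediate claim you are actually proving, namely that $|F(N)|$ and $|N/F(N)|$ are coprime, is false. Take $N=\Z/7\Z\rtimes\Z/9\Z$ with generators $a,b$ and $bab^{-1}=a^2$: all Sylow subgroups are cyclic, but $b^3$ is central, so $F(N)=\langle a,b^3\rangle\cong\Z/21\Z$ and $N/F(N)\cong\Z/3\Z$, giving $\gcd(n,m)=3$ for the Fitting-subgroup sequence. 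The theorem is of course still true for this $N$ (take $\Z/7\Z\lhd N$ with quotient $\Z/9\Z$), but the Fitting subgroup is the wrong kernel.

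The normal subgroup that works in general is the derived subgroup $N'$ (which is contained in $F$, hence cyclic, and has cyclic quotient $N/N'$ since $N/N'$ is abelian with cyclic Sylow subgroups); the genuine content of the theorem, which your argument never touches, is the coprimality $\gcd(|N'|,|N/N'|)=1$, i.e.\ that $N'$ is a Hall subgroup of $N$. Establishing this requires an additional input --- typically the focal subgroup/transfer argument showing that for each prime $p$ the Sylow $p$-subgroup of $N$ maps isomorphically either into $N'$ or into $N/N'$, or a more careful induction through the normal $p$-complements you already constructed. As it stands, the proof establishes a short exact sequence of cyclic groups but not the coprimality asserted in the statement.
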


\begin{corollary} \label{char}
Let $N$ be a non-trivial finite group such that it has no non-cyclic Sylow $p$-subgroups for any prime $p$. Then $N$ contains a non-trivial cyclic characteristic subgroup.
\end{corollary}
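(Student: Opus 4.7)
The plan is to apply Theorem~\ref{groups} directly. It produces coprime positive integers $m$, $n$ with $|N| = mn$ and a short exact sequence
$$1 \to \Z/n\Z \to N \to \Z/m\Z \to 1,$$
so $N$ contains a normal cyclic subgroup $C$ of order $n$. If $n = 1$ then $N \cong \Z/m\Z$ is itself cyclic and non-trivial, and one may take $N$ as its own characteristic subgroup. Otherwise I would show that $C$ is the desired non-trivial cyclic characteristic subgroup.

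Since $\gcd(m, n) = 1$, the subgroup $C$ is a normal Hall $\pi$-subgroup of $N$, where $\pi$ denotes the set of prime divisors of $n$. To prove that $C$ is characteristic, I would verify that $C$ coincides with the set $S$ of all $\pi$-elements of $N$; this set is manifestly stable under every automorphism. The inclusion $C \subseteq S$ is clear because $|C| = n$ is a $\pi$-number. For the reverse inclusion, given $x \in S$ the product $C\langle x\rangle$ is a subgroup of $N$ (since $C \lhd N$) whose order divides $n \cdot |x|$, hence is again a $\pi$-number; thus $C\langle x\rangle$ is a $\pi$-subgroup containing $C$, and since $C$ is a Hall $\pi$-subgroup its index in $N$ is coprime to every prime in $\pi$. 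This forces $C\langle x\rangle = C$ and so $x \in C$.

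The only delicate point is this identification of the normal Hall subgroup with the set of $\pi$-elements; once it is established, the corollary follows immediately, because $C$ (or $N$ itself in the edge case $n = 1$) is cyclic and non-trivial by construction. No additional machinery beyond Theorem~\ref{groups} and elementary Hall-subgroup arithmetic is required.
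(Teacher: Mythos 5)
Your proof is correct and follows essentially the same route as the paper: apply Theorem~\ref{groups} to get the short exact sequence, dispose of the case $n=1$ by noting $N$ is itself cyclic, and then show the normal subgroup $\Z/n\Z$ is characteristic via the coprimality of $m$ and $n$. The paper phrases this last step as ``the unique subgroup of order $n$'' while you identify it as the set of $\pi$-elements for $\pi$ the primes dividing $n$, but these are the same observation, and your Hall-subgroup verification of the identification is complete and correct.
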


\begin{proof}
Using Theorem \ref{groups} we obtain a short exact sequence 
$$1 \to H \to N \to \Z/m\Z \to 1,$$
where $H \cong \Z/n\Z$ such that $n$ and $m$ are coprime. In case when $n = 1$ the group $N$ is cyclic and the proposition follows immediately, so let us consider the case when $n \ne 1$.

If an element $x \in N$ has a non-trivial image under the surjection to $\Z/m\Z$, then its order is not coprime with $m$. Therefore, the only subgroup of order $n$ in $N$ is the image of $H$, so it is characteristic.
\end{proof}

\begin{prop} \label{normal}
Let $A$ be a non-trivial division algebra of prime degree $p \geqslant 3$ over a field~$k$. Then every non-trivial finite subgroup $G \subset A^*/k^*$ contains a non-trivial normal cyclic subgroup.
\end{prop}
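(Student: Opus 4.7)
The plan is to split on whether the subgroup $N_G \lhd G$ constructed in Corollary~\ref{N_G} is trivial, and to combine Proposition~\ref{c1} with the characteristic-subgroup corollary~\ref{char} in the non-trivial case. First I would invoke Corollary~\ref{N_G} (together with Proposition~\ref{N_GLift}) to obtain a normal subgroup $N_G \lhd G$ with $G/N_G$ abelian, and with $N_G$ admitting a finite lift $\widetilde{N}_G \subset A^*$.

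Suppose $N_G$ is trivial. Then $G \cong G/N_G$ is abelian, and since $G$ is non-trivial it contains at least one non-trivial element $g$; the cyclic subgroup $\langle g \rangle \subset G$ is normal in $G$ because $G$ is abelian, which settles this case.

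Now suppose $N_G$ is non-trivial. The key step is to verify that every Sylow subgroup of $N_G$ is cyclic. Given a prime $q$, take a Sylow $q$-subgroup $\widetilde{P} \subset \widetilde{N}_G$; its image $P := f_A(\widetilde{P})$ is a Sylow $q$-subgroup of $N_G$ (the image of a Sylow subgroup under a surjection is a Sylow subgroup of the image, as used in the proof of Proposition~\ref{p-groups}). Thus $P$ is a finite $q$-subgroup of $A^*/k^*$ that admits a finite lift, so Proposition~\ref{c1} (which uses $p \geqslant 3$ and Theorem~\ref{bou} to rule out the dihedral possibility from Proposition~\ref{p-groups}) forces $P$ to be cyclic.

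With all Sylow subgroups of the non-trivial group $N_G$ cyclic, Corollary~\ref{char} supplies a non-trivial cyclic characteristic subgroup $C \subset N_G$. Since $N_G$ is normal in $G$ and $C$ is characteristic in $N_G$, the subgroup $C$ is normal in $G$, giving the desired non-trivial normal cyclic subgroup. I expect the only mild subtlety to be the bookkeeping confirming that passing from a Sylow subgroup of $\widetilde{N}_G$ down to $N_G$ really does produce a Sylow subgroup that satisfies the lifting hypothesis of Proposition~\ref{c1}, but this is a direct consequence of the construction of $\widetilde{N}_G$ in Proposition~\ref{N_GLift}.
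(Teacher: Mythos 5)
Your proposal is correct and follows essentially the same route as the paper: split on whether $N_G$ is trivial, use abelianness of $G/N_G$ in the trivial case, and combine Proposition~\ref{c1} with Corollary~\ref{char} in the non-trivial case. The only difference is that you spell out the (correct and worthwhile) bookkeeping showing that Sylow subgroups of $N_G$ inherit finite lifts from Sylow subgroups of $\widetilde{N}_G$, a step the paper leaves implicit.
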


\begin{proof}
Let $N_G \lhd G$ be the subgroup of $G$ defined in Section \ref{N_Gsect}. In the case when $N_G$ is trivial, the group $G$ is abelian by Lemma \ref{N_G}, so the claim follows automatically. Thus consider the case of non-trivial $N_G$. In this case, Proposition \ref{c1} and Corollary \ref{char} yield that $N_G$ contains a non-trivial cyclic characteristic subgroup, which will therefore be a normal subgroup of $G$.
\end{proof}


\begin{prop} \label{fields}
Let $K \supset k$ be a field extension of prime degree $p$ and let $\mathcal{A} \subset K^*/k^*$ be a finite group. Then the group $\mathcal{A}$ can be presented in one of the forms $\Z/n\Z \times \Z/p\Z \cong \Z/np\Z$ or $\Z/n\Z$, where $n$ is coprime with $p$. Moreover, an element $x \in \mathcal{A}$ can be lifted to an element of $K$ of finite order if and only if it lies in~$\Z/n\Z$.
\end{prop}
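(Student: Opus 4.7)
The plan is to combine Proposition~\ref{GaloisMain} with the classical fact that every finite subgroup of the multiplicative group of a field is cyclic. Write $f=f_K$, and let $\mathcal{B}\subset\mathcal{A}$ be the subgroup of classes admitting a lift of finite order in $K^*$. First I would show that $\mathcal{B}$ itself is cyclic. Following the recipe of Proposition~\ref{N_GLift}, pick a finite generating set $\{x_1,\dots,x_r\}$ of $\mathcal{B}$ and for each $x_i$ a lift $\widetilde{x}_i\in K^*$ of finite order; each $\widetilde{x}_i$ is then a root of unity in $K$, so the subgroup $\widetilde{\mathcal{B}}\subset K^*$ that they generate is a finitely generated torsion abelian group, hence finite. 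As a finite subgroup of the multiplicative group of a field, $\widetilde{\mathcal{B}}$ is cyclic, and therefore its homomorphic image $\mathcal{B}=f(\widetilde{\mathcal{B}})$ is cyclic as well.

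Next I would apply Proposition~\ref{GaloisMain} to the extension $K\supset k$ and the group $\mathcal{A}$, taking $K'=k(f^{-1}(\mathcal{B}))$. The proposition tells us that $|\mathcal{A}/\mathcal{B}|$ divides $[K:K']$, which in turn divides $[K:k]=p$. Since $p$ is prime, $|\mathcal{A}/\mathcal{B}|\in\{1,p\}$. Combined with the previous step, $\mathcal{A}$ is a finite abelian group fitting into an extension
$$1\longrightarrow\mathcal{B}\longrightarrow\mathcal{A}\longrightarrow\mathcal{A}/\mathcal{B}\longrightarrow 1$$
with cyclic kernel and cyclic quotient of order~$1$ or~$p$; the classification of finite abelian groups then forces $\mathcal{A}$ to be either cyclic or isomorphic to $\mathcal{B}\times\Z/p\Z$.

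The main obstacle is to prove that $|\mathcal{B}|$ is coprime with $p$, which, together with the previous step, automatically splits the extension and rewrites $\mathcal{A}$ in one of the two forms $\Z/n\Z$ or $\Z/n\Z\times\Z/p\Z$ with $n=|\mathcal{B}|$ coprime with~$p$. To establish coprimality I would argue by contradiction: an element of order $p$ in $\mathcal{B}$ would provide a root of unity $\zeta\in K^*$ with $\zeta^p\in k^*$ and $\zeta\notin k^*$, forcing $K=k(\zeta)$. A careful analysis of the resulting cyclotomic subextension, combined with the prime degree constraint $[k(\zeta):k]=p$ and the structure of the cyclic lift $\widetilde{\mathcal{B}}$, yields the contradiction. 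This is the step that requires the most care, as the rest of the argument is formal once coprimality is in hand.

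Finally, the ``moreover'' clause follows directly from the construction of $\mathcal{B}$: the $\Z/n\Z$ factor appearing in the decomposition is, by definition, the subgroup of elements admitting a lift of finite order, so an element of $\mathcal{A}$ lies in $\Z/n\Z$ if and only if it lies in $\mathcal{B}$, which is precisely the desired equivalence.
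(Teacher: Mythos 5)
Your outline follows the paper's own strategy for the first two steps: $\mathcal{B}$ is cyclic because finitely many finite-order lifts generate a finite, hence cyclic, subgroup of $K^*$, and Proposition~\ref{GaloisMain} bounds $|\mathcal{A}/\mathcal{B}|$ by $[K:K']$, which divides $p$. The problem is precisely the step you yourself flag as ``the main obstacle'': you assert that ``a careful analysis of the resulting cyclotomic subextension'' will show $|\mathcal{B}|$ is coprime with $p$, but you never carry this analysis out. This is not a deferrable verification --- it is the entire content of the ``moreover'' clause --- and in fact no such analysis can succeed, because the claim fails in general. Take $p=3$, $k=\mathbb{Q}(\zeta_3)$ and $K=\mathbb{Q}(\zeta_9)$, so that $[K:k]=\varphi(9)/\varphi(3)=3$. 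The root of unity $\zeta_9$ does not lie in $k$, while $\zeta_9^3=\zeta_3\in k$, so its class $x\in K^*/k^*$ has order exactly $3$ and visibly admits a lift of finite order. For $\mathcal{A}=\langle x\rangle\cong\Z/3\Z$ one gets $\mathcal{B}=\mathcal{A}$ of order $p$, contradicting both your intended coprimality statement and the ``moreover'' part of the proposition.

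For what it is worth, the paper's own treatment of this step is also flawed: it observes, correctly, that $K^*$ contains no element of order exactly $p$ outside $k^*$ (a primitive $p$-th root of unity generates an extension of degree dividing $p-1$, which cannot equal the prime $p$, hence lies in $k$), and then concludes that the order of $\widetilde{\mathcal{B}}$, hence $n$, is coprime with $p$. That inference is a non sequitur, as the example above shows: the offending lift $\zeta_9$ has order $9$, not $3$, so it is not excluded by the stated observation, yet its image modulo $k^*$ has order $3$. So you have correctly isolated the critical point of the argument, but neither your sketch nor the paper closes it; to make the statement true one must impose an extra hypothesis excluding configurations of the type $k(\zeta_{p^2})\supset k(\zeta_p)$ (for instance, that $k$ contains no primitive $p$-th root of unity), and then actually prove the coprimality under that hypothesis rather than gesture at it.
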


\begin{proof}
Let $\mathcal{B} \subset \mathcal{A}$ be the subgroup of all elements $x$ such that there exists an element $\widetilde{x} \in K^*$ of finite order mapped to $x$ by the canonical projection as in Proposition~\ref{GaloisMain}. Generating a subgroup of $K^*$ by such preimages $\widetilde{x}$ of every $x \in \mathcal{B}$ provides us with a finite subgroup $\widetilde{\mathcal{B}} \subset K^*$ that surjects onto $\mathcal{B}$ by the canonical projection. As every finite subgroup of the multiplicative group of a field is cyclic, the group $\widetilde{\mathcal{B}}$ is cyclic and thus $\mathcal{B}$ is cyclic as well. Denote by $n$ its order. Note that, moreover, there is no element $x$ of order $p$ in $K^*$ that does not lie in $k^*$, as otherwise it would generate a subextension $k \subset K' \subset K$ of smaller degree. Therefore, the order of $\widetilde{\mathcal{B}}$ is coprime with~$p$, and thus so is $n$.

We now apply Proposition \ref{GaloisMain} to obtain that the quotient $\mathcal{A}/\mathcal{B}$ is a subgroup of $\Z/p\Z$, which, as $\mathcal{A}$ is abelian, completes the proof of the assertion.
\end{proof}

As before, we start by considering the case when the finite group $G\subset A^*/k^*$ contains no non-trivial elements that admit a lift of finite order.

\begin{lemma} \label{triv}
Let $A$ be a non-trivial division algebra of prime degree $p \geqslant 3$ over a field $k$ of characteristic coprime with $p$. Consider its finite subgroup $G \subset A^*$. If its normal subgroup $N_G$ defined in Corollary \ref{N_G} is trivial, then~\mbox{$G \subset \Z/p\Z \times \Z/p\Z$.}
\end{lemma}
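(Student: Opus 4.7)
The plan is to combine three inputs and pin down $G$. First, since $N_G$ is trivial, Corollary~\ref{N_G} forces $G$ to be abelian. Second, Proposition~\ref{CV} (applicable because the characteristic of $k$ is coprime with $n=p$) gives $|G| \mid p^2$, so $G$ is an abelian $p$-group with $|G|\in\{1,p,p^2\}$. It then suffices to prove that $G$ has exponent dividing $p$: an abelian $p$-group of order at most $p^2$ and exponent $p$ embeds into $\Z/p\Z \times \Z/p\Z$.

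To bound the exponent, I would fix a nontrivial $x \in G$ and a preimage $\widetilde{x}\in A^*$. Since $\ker f_A = k^*$, the element $\widetilde{x}$ does not lie in $k$, so $k(\widetilde{x})$ is a commutative subalgebra of $A$ strictly containing $k$. Because $A$ is a division algebra, $k(\widetilde{x})$ is in fact a subfield, and (as already used at the end of the proof of Proposition~\ref{CV}) the degree of any subfield of $A$ over $k$ divides $\deg A = p$. Therefore $[k(\widetilde{x}):k] = p$.

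Now I apply Proposition~\ref{fields} to the degree-$p$ extension $k(\widetilde{x}) \supset k$ and the finite cyclic subgroup $\langle x \rangle \subset k(\widetilde{x})^*/k^*$. The proposition asserts that $\langle x\rangle$ must be of the form $\Z/m\Z$ or $\Z/m\Z \times \Z/p\Z$ with $\gcd(m,p)=1$, so the order of $x$ is either $m$ or $mp$. Since $|G|$ is a power of $p$, this order is also a power of $p$, which forces $m=1$. As $x$ was taken nontrivial, the order of $x$ is exactly $p$, completing the proof.

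The argument is essentially a two-step reduction; the only delicate bookkeeping is checking that the characteristic hypothesis of the lemma (coprime with $p$) carries over into Propositions~\ref{CV} and~\ref{fields}, both of which rely ultimately on Proposition~\ref{GaloisAdd}'s assumption that the relevant degree be coprime with the characteristic. Beyond that, the proof is a direct chain of previously stated results, with no further combinatorial or group-theoretic obstacles.
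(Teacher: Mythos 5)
Your proof is correct and takes essentially the same route as the paper: abelianness from Corollary~\ref{N_G}, the bound $|G|\mid p^2$ from Proposition~\ref{CV}, and exponent $p$ obtained by looking at the degree-$p$ subfield generated by a preimage of a non-trivial element. The only cosmetic difference is in the exponent step, where the paper applies Proposition~\ref{GaloisAdd} directly to $\langle x\rangle$ and that subfield (using that $N_G$ trivial means no non-trivial element lifts to finite order), whereas you route through Proposition~\ref{fields} together with the observation that $G$ is a $p$-group; both reduce to the same underlying field-theoretic input.
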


\begin{proof}
Let $x \in G$ be a non-trivial element and let $K \supset k$ be the field extension generated by~$f_A^{-1}(x)$. Theorem~\ref{bou} then yields that the degree~$[K \colon k] = p$. As $N_G$ is trivial, no non-trivial element of $G$ admits a preimage of finite order, thus we can apply Proposition~\ref{GaloisAdd} to the group generated by $x$ and the field $K$ to obtain that the order of $x$ is exactly~$p$. By Proposition \ref{CV} the order of $G$ divides $p^2$ therefore, as every element of $G$ has order at most $p$, the group $G$ is a subgroup of $\Z/p\Z \times \Z/p\Z$.
\end{proof}

We are now ready to prove the principal theorem of the section.

\begin{proof}[Proof of Theorem \ref{mainPrime}]
Consider the subgroup $N_G \lhd G$ defined in Corollary \ref{N_G}. In the case when $N_G$ is trivial Lemma \ref{triv} yields the claim.

Suppose now that $N_G$ is non-trivial. Consider a non-trivial normal cyclic subgroup $\Theta \lhd G$ obtained by Proposition \ref{normal}. Consider the subfield $K \subset A$ generated over $k$ by the preimage of $\Theta$ under the canonical projection. Since $\Theta$ is a normal subgroup of $G$, the group $G$ acts on $K$ by conjugations, which yields a map $\varphi \colon G \to \Aut(K/k)$. Denote by $G'$ its kernel. By Theorem \ref{bou} every non-trivial field extension of $k$ contained in $A$ is maximal, hence any lift of any element of $G'$ lies in $K^*$, and therefore $G' \subset K^*/k^*$. We now apply Proposition \ref{fields} to get that $G'$ is isomorphic either to $\Z/n\Z$ or to $\Z/n\Z \times \Z/p\Z$, where $n$ is coprime with $p$ and the unique subgroup of $G'$ isomorphic to $\Z/n\Z$ consists of all the elements of $G'$ that admit a lift of finite order in $K^*$.

Note that the group $\Aut(K/k)$ is either $\Z/p\Z$ or trivial. Consider the case when $n = 1$. The group $G$ then has the order dividing $p^2$, so to prove the assertion it remains to exclude the $\Z/p^2\Z$ case. Suppose that $G \cong \Z/p^2\Z$. As $n = 1$, no element of $G' \cong \Z/p\Z \subset \Z/p^2\Z$ admits a lift of finite order, therefore the group $N_G$ is trivial, which contradicts the assumption. This completes the proof in the case when~$n = 1$. 

Suppose now that $n \ne 1$. In this case the extension $K \supset k$ is cyclotomic, as it can be generated by the lift of the generator of $\Z/n\Z$ of finite order, thus it is Galois. Recall from Theorem~\ref{bou} that $K$ is of degree $p$ over~$k$, hence its group of automorphisms $\Gal(K/k)$ is isomorphic to~$\Z/p\Z$. Denote by $\widetilde{x} \in K$ any preimage of finite order of the generator $x$ of $\Z/n\Z$. Note that $\widetilde{x}$ is a root of unity. The Galois group $\Gal(K/k)$ then acts on the group generated by $\widetilde{x}$ in such a way that the only elements preserved by this action are those contained in $k^*$. Taking quotient by $k^*$ we thus obtain that the action of $\Gal(K/k) \cong \Z/p\Z$ on the group generated by $\widetilde{x}$ induces an action on $\Z/n\Z$ such that the resulting semidirect product $$\Z/n\Z \rtimes \Gal(K/k) \cong \Z/n\Z \rtimes \Z/p\Z$$ is balanced, as every element of $\Z/p\Z$ acts non-trivially on all the elements of $\Z/n\Z$.

We can now conclude the proof considering the image of $G$ under $\varphi$ in $\Gal(K/k) \cong \Z/p\Z$. If it is trivial the proof is complete, so consider the case when $\im \varphi \cong \Z/p\Z$. Then $G$ fits into a short exact sequence
$$1 \to G' \to G \xrightarrow{\varphi} \Z/p\Z \to 1.$$
We proceed by proving that $\varphi$ admits a right inverse. Consider any preimage $t \in G$ under $\varphi$ of the generator of the group $\Gal(K/k)$. By the discussion above, $t$ does not commute with any non-trivial element of $\Z/n\Z$, thus the order of $t$ is either $p$ or $p^2$. Due to Proposition~\ref{fields} no element of order $p$ can admit a lift of finite order in $A$, so the order of $t$ cannot be equal to $p^2$ by Proposition~\ref{GaloisAdd} applied to the subfield generated by $f_A^{-1}(t)$. Therefore the order of $t$ is $p$ and there exists an inclusion $\im \varphi \hookrightarrow G$, and thus $G$ is isomorphic either to $\Z/n\Z \rtimes \Z/p\Z$ or to
$$(\Z/n\Z \times \Z/p\Z) \rtimes \Z/p\Z \cong (\Z/n\Z \rtimes \Z/p\Z) \times \Z/p\Z.$$ 
The latter isomorphism is due to the fact that there is no non-trivial action of $\Z/p\Z$ on itself by automorphisms. Moreover, as discussed above, the semidirect products in both of these cases are balanced.
\end{proof}

The constructions of examples of possible finite subgroups acting on division algebras of degree~$3$ are presented in \cite[Sect. 4]{surfaces}. One can generalize them for division algebras of any prime degree to obtain that all the groups mentioned in Theorem~\ref{mainPrime} are subgroups of automorphism groups of division algebras over specific fields, at least in characteristic~$0$. These examples furthermore provide illustration to Theorem~\ref{main}: for the group $(\Z/n\Z \rtimes \Z/p\Z) \times \Z/p\Z$, constructed this way, in the notation of Theorem~\ref{main} the group $N_G$ is isomorphic to $\Z/n\Z$.

\begin{remark}
Note that we have, moreover, obtained that, in the setting of Theorem~\ref{mainPrime}, the group $\Z/n\Z$ with $n$ coprime with $p$ acts on $A$ if and only if $A$ contains a field extension $K$ over $k$ containing the $n$-th root of unity $\xi$, such that $k$ does not contain any of its non-trivial powers.
\end{remark}

\end{document}